\newcommand{\Z}{\mathbb{Z}}
\newcommand{\F}{\mathbb{F}}
\theoremstyle{plain}
\newtheorem{theorem}{Theorem}[section]
\newtheorem{conjecture}[theorem]{Conjecture}
\theoremstyle{definition}
\newtheorem{remarks}[theorem]{Remarks}
\newtheorem*{remark*}{Remark}
\newtheorem*{remarks*}{Remarks}
\numberwithin{table}{theorem}
\begin{document}

\author{Jos\'e Alejandro Lara Rodr\'iguez and Dinesh S. Thakur}

\title[Zeta-like Multizeta Values]{Zeta-like 
Multizeta Values for higher genus curves}

\address{Department of Mathematics, University of Rochester, 
Rochester, NY 14627 USA, dinesh.thakur@rochester.edu}

\address{
Facultad de Matem\'aticas, Universidad Aut\'onoma de Yucat\'an, Perif\'erico
Norte, Tab. 13615,
M\'erida, Yucat\'an, M\'exico, alex.lara@correo.uady.mx}

\subjclass{11M32, 11G09, 11G30}

\keywords{t-motives, periods, mixed Tate motives}


\begin{abstract}
We prove or conjecture several relations between the multizeta 
values for positive genus function fields of class number one, focusing on the zeta-like values, namely those 
whose ratio with the zeta value of the same weight is rational 
(or conjecturally equivalently algebraic). These are the first known relations between 
multizetas, which are not with prime field coefficients. We seem to have one universal family. 
We also find that interestingly the mechanism with which the relations work 
is quite different from the rational function field case, raising interesting questions 
about the expected motivic interpretation in higher genus.   We provide some data in support 
of the guesses. 
\end{abstract}

\maketitle

\section{Introduction}

Recently studied connections of the multizeta values, introduced by Euler,  with the arithmetic 
fundamental groups have made them an  important tool in the recent push towards non-abelian, 
homotopical directions in number theory. See e.g., \cite{Z16} and references there to the huge body of work by several mathematicians.  

For a survey of work on the function field analog, with connections to Drinfeld 
modules and Anderson's t-motives etc. (see \cite{A86, G96, T04} for background), we refer to the survey \cite{T17}.

Let us focus on very simple type of relations between the multizeta values. 
Following \cite{LT14}, we call a multizeta value 
zetalike if its ratio with the zeta value of the same weight is rational. In case of 
even weight, we also call it Eulerian. (Often we restrict to multizeta of depth more than one, 
without mention, since only then the concept is really significant). 
In the case of rational number field, we know some
eulerian families \cite{LT14}, but we speculated that only $\zeta(2m+1)$ and its duals 
may be only multizetas that are  zetalike of odd weight. In contrast, we proved  \cite{LT14} (see also \cite{CPY19,  C17, T17}) (conjecturally) all the Eulerian multizetas for the rational function field case, but could only prove and conjecture several zetalike families of odd weight, without getting full characterization, it seems. 

In this paper, we investigate this question for higher genus function fields, where so far 
the only relations known \cite{T10} were the sum shuffle type relations (with prime field coefficients) 
(excluding the obvious relations $\zeta(ps_1, \cdots, ps_r)=\zeta(s_1, \cdots, s_r)^p$ in characteristic $p$.)
The t-motivic period interpretation \cite{AT09} in depth more than one is also only developed so far in genus zero. 
Now we find (and can prove some) much more interesting relations involving 
non-prime field coefficients. 

Several years ago, the second author had checked (numerically) that $\zeta(1, q-1)$ is 
not zetalike  (for one class number one elliptic curve over $\F_q$, with $q=2$), in contrast to what he had 
proved \cite{T04, T09} in the rational function field case over $\F_q$. That this multizeta being zetalike is (conjecturally) the only non-trivial linear relation in weights at most $q$. So in contrast to the rational function 
fields, in higher genus, it seems that the relations start at higher weights. 

Now with  more extensive use 
of computer aided numerical experiments, we have better understanding (see below) of what should 
happen and some `positive identifications' of zetalike multizeta. 

We then  prove 
some of these conjectures by developing further, from the zeta case to multizeta, the `polylog-algebraicity techniques' of \cite{T92}, where an appropriately constructed  algebraic function on the curve cross itself (or Hilbert cover cross 
itself in general) specialized at graph of the $d$ th power of Frobenius gives appropriate power sums of degree $d$ (or at most $d$), for all $d$.  In \cite{T92}, these special algebraic functions (called F-functions) were used to give motivic algebraic incarnation
of some zeta values (especially at $1$, in class number one situation) generalizing partially the results of \cite{AT90} to higher genus. 
In 2009, these were used to verify \cite{Tp} that Taelman' s beautiful analog of  
analytic class number 
formula \cite{Ta10}, which was then made only for the base $\F_q[t]$, works also for the
 higher genus cases of class number one.  Various  aspects of log-algebraicity were 
 developed much further in e.g., \cite{A94, A96, F15, APT16, D16, GP18, M18} by  Anderson, Angl\`es, Derby, Fang, Green, Mornev, Ngo Dac, Papanikolas, Pellarin, Taelman, Tavares-Ribeiro,  and \cite{T04}[Sec. 8.9-8.10]. 
We need to extend these techniques to adapt to multizeta. We have not resolved this issue of the extent of log-algebraicity fully
in this paper, but just sufficient to prove our theorems and to illustrate the techniques. 

Interestingly, the proofs as well as the mechanisms how these identities work 
out at infinite level, as  limits from finite levels,  are now quite different than in the genus zero case. 
 It shows that we will need a 
better understanding of the underlying t-motivic mechanisms to understand the situation in general. (See Remark (II) in Section 4.) In informal terms, the motives here are constructed through such functions and the motivic identities are identities between such functions, and the iterated 
sum mechanism comes via the Frobenius-difference matrix equations \cite{AT09}[Sec. 2.5] 
that arise in Anderson's theory of $t$-motives. We expect (and know to a large extent 
by the works of Anderson, Brownawell, Papanikolas, Chang and Yu) that the multizeta values relations 
come from motivic identities, but in our higher genus, in contrast to what we know in genus zero, 
though it is certainly not ruled out, we have not been able to do this (as explained in Section 4), 
but have resorted to different mechanism of proofs, which raise some interesting questions and formulations.

We find (with only numerical evidence in low weights)  exactly one zetalike/
eulerian (`primitive', i.e., tuple not a multiple of $p$) family $\zeta(q^n-1, (q-1)q^n, \cdots, (q-1)q^{n+k})$, where $\F_q$ is the 
field of constants, surviving  from the rational function field case,  
for all (4 of them) class number one situations of higher genus. 
We have not yet found any zetalike example in odd weights in the higher genus case.

We did not find any zetalikes in higher class number ones, and speculate that probably 
there are families of Hilbert class field coefficient linear combinations of multizeta 
values of different ideal classes  (for the same tuple of $s_i$'s) that are algebraic multiples of zeta of the same weight, but it might be rare or impossible for a single value to be zetalike in this case, unless 
you take all ideal classes into account. 

In the function field analog that we investigate (see \cite{T17} for survey and references), 
the relations 
are still not conjecturally well-understood, though in contrast, there 
are also some very strong  transcendence and linear/algebraic independence 
results (by Anderson, Brownawell, Chang, Papanikolas, Yu, et al)  proved. Note that the various transcendence, independence results that have been proved for the zeta immediately 
carry over to the zetalike multizeta.

We  first fix the notation and give the basic definitions.  
Next, we  state our  conjectures on the zetalike families and  give the proof 
of the special cases of conjectures. Then we give several remarks on possible generalization 
of the proof techniques, the contrasts 
with genus zero case and motivic implications. Then we discuss  the relative situation.
 Finally,  we discuss the  numerical data, calculated by the first author,
  giving some evidence for the  conjectures made from it.

  \section{Notation and definitions}
  
 Consider a  function field $K$ (of one variable over finite field $\F_q$), having 
a  rational, or equivalently of degree one place. We choose any such place and 
label it $\infty$.  Denote the corresponding ring of integers by $A$ (consisting 
of elements of $K$ with only pole at $\infty$, the completion by $K_{\infty}$, 
 and the completion  of its  fixed algebraic closure by $C_{\infty}$. Fix 
 an uniformizer at infinity, so that we have corresponding sign (and degree) function.
 Let $A+$ ($A_d+$, respectively) denote the set of monic, i.e., of  sign 1 (monic  of degree $d$ respectively) 
 elements of $A$.

  For $k, k_i, d\in \Z$, consider the power sums (sometimes denoted by $S_d(-k)$ in the references) 
$$S_d(k)=\sum_{a\in A_d+}\frac{1}{a^k}\in K, $$
and extend inductively to the iterated power sums 
\begin{align*}S_d(k_1, \cdots, k_r) & = & &S_d(k_1)& &S_{<d}(k_2, \cdots, k_r)&\\
& = &  &  S_d(k_1) & &\sum_{d>d_2>\cdots>d_r}
S_{d_2}(k_2)\cdots S_{d_r}(k_r),& \end{align*}
where $S_{<d}=\sum_{i=0}^{d-1}S_i$ as the notation suggests.

For positive integers $s_i$, we consider the multizeta values
$$\zeta(s_1, \cdots, s_r):= \sum_{d=0}^{\infty} S_d(s_1, \cdots, s_r)=\sum  \frac{1}{a_1^{s_1}\cdots a_r^{s_r}}\in K_{\infty},$$
of weight $\sum s_i$ and depth $r$ (associated a priori to the tuple $s_i$ rather than the value). 
(Here the second sum is over monic $a_i\in A$ of strictly decreasing degrees). 

Call $\zeta(s_1, \cdots, s_r)$ {\it zetalike} (we only care, if 
$r>1$) if $\zeta(s_1, \cdots, s_r)/\zeta(\sum s_i)
\in K$. 

In the case the weight $\sum s_i$ is $q$-even (i.e., a multiple of $q-1$), we also 
call the zetalike value {\it eulerian}, in recognition of the simple evaluation by Euler in the rational 
case, and analogous evaluations \cite{C35, T04} by Carlitz and Goss in function fields. 

Finally,  for $\rho$ a  sign normalized rank one Drinfeld $A$-module (also called Hayes module), we denote the  corresponding 
exponential and logarithm functions as $\exp_{\rho}(z)=\sum z^{q^i}/d_i$ and $\log_{\rho}(z)=\sum z^{q^i}/\ell_i$. While $\ell_i$ and $d_i$ are polynomials in $t$ in the $A=\F_q[t]$ case, in higher genus case, they are rational functions (non-integral in general) in the Hilbert class field.
(see e.g., \cite{T04}, Chapter 2 for details).

 \section{Class number one situation: Conjectures and theorems} 
  
 Apart from $A=\F_q[t]$'s  (one for each prime power $q$), there are exactly 
 four (see \cite{T04} for references and corresponding Hayes modules) other $A$'s of class number one: 
 
 \begin{itemize} 
 
 \item
 (i)  $\F_2[x, y]/y^2+y=x^3+x+1$, 
  
  \item
   
 (ii) $\F_3[x, y]/y^2=x^3-x-1$, 
 
 \item
 
 (iii) $\F_4[x, y]/y^2+y=x^3+w$, where 
 $w^2+w+1=0$, 
 
 \item 
 
 (iv) $\F_2[x, y]/y^2+y=x^5+x^3+1$.
 
 \end{itemize} 
 
  Note that the first three are of genus 1 while the last one is of genus 2.

\begin{conjecture} For any class number one $A$ with constant field $\F_q$, the multizeta values 
$\zeta(q^n-1, (q-1)q^n, \cdots, (q-1)q^{n+k})$ are zetalike. 
\end{conjecture}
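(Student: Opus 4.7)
The plan is to adapt the polylog-algebraicity techniques of \cite{T92}, extended as previewed in Section~4 of this paper, from single zeta values to iterated multizeta sums. First I would record the weight,
$$(q^n-1)+(q-1)\sum_{i=0}^{k}q^{n+i}=q^n-1+q^{n+k+1}-q^n=q^{n+k+1}-1,$$
which is divisible by $q-1$; the predicted zetalike ratio is in fact of eulerian type. The target is then to produce an explicit $c\in K$ with $\zeta(q^n-1,(q-1)q^n,\ldots,(q-1)q^{n+k})=c\cdot\zeta(q^{n+k+1}-1)$.

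At the level of the finite-degree power sums I would split off the outermost component,
$$S_d(q^n-1,(q-1)q^n,\ldots,(q-1)q^{n+k})=S_d(q^n-1)\cdot S_{<d}((q-1)q^n,\ldots,(q-1)q^{n+k}),$$
and apply the Frobenius-scaling identity $S_e((q-1)q^j)=S_e(q-1)^{q^j}$, valid because $q$ is a power of the characteristic, to each nested factor of the tail. The tail then becomes an iterated sum of $q^{n+i}$-th powers of $S_{d_i}(q-1)$ over strictly decreasing degrees $d>d_2>\cdots>d_{k+2}$. The plan is then an induction on $k$: the base case is the depth-two identity $\zeta(q^n-1,(q-1)q^n)=c_n\cdot\zeta(q^{n+1}-1)$, and the inductive step peels off the outermost Frobenius twist and absorbs it into the zeta on the right, reducing to the depth-$(k+1)$ statement at a shifted index.

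The real work is the base case. In the rational function field setting it follows from a pointwise identity between $S_d(q^n-1,(q-1)q^n)$ and $S_d(q^{n+1}-1)$ holding at every $d$, but, as the paper emphasizes, no such pointwise identity survives to higher genus. Instead I would construct an F-function on (a Hilbert class cover of) $X\times X$---where $X$ is the curve defining $A$---whose pullback along the graph of the $d$-th power of Frobenius computes the relevant combination of power sums modulo a controlled boundary term. Summing over $d$, the boundary terms should telescope to an element of $K$ rather than cancel to zero, and this telescoped residue is exactly the $K$-rational ratio $c_n$ we want.

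The main obstacle will be this F-function construction together with the verification that the boundary telescopes to a $K$-rational---not merely $K_\infty$-transcendental---quantity; this is precisely the feature that fails in the naive genus-zero analog and that the paper's introduction flags as the essential new difficulty. Once the base case is in place, the inductive step should be routine modulo a log-algebraicity lemma controlling the interaction of $S_d(q^n-1)$ with the Frobenius-twisted $(q-1)$-weight tail. Producing a closed form for $c$, rather than merely proving $c\in K$, will likely require case-by-case analysis using the explicit Hayes-module data of each of the four higher-genus class-number-one curves listed in Section~3.
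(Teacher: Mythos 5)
What you are addressing is Conjecture~3.1, which the paper states but does not prove in general; Theorems~3.3, 5.1, and the three appendix theorems establish only the isolated cases $n=1,2$, $k=0$ in depth two. So your text is at most a program. Your framing---the weight $q^{n+k+1}-1$ is always divisible by $q-1$, so the whole family is eulerian; the factorization $S_d(k_1,\ldots,k_r)=S_d(k_1)S_{<d}(k_2,\ldots,k_r)$; the Frobenius-twist identity $S_e((q-1)q^j)=S_e(q-1)^{q^j}$---matches the paper. But the mechanism you propose in the base case does not. The paper's proofs never invoke a ``boundary term telescoping to a $K$-rational residue.'' They construct two separate F-functions $F_{\leq}$ and $F_{<w}$ with $\ell_d^w S_{\leq d}(s_1,s_2)=F_{\leq}(d)$ and $\ell_d^w S_{<d}(w)=F_{<w}(d)$, and then prove that for the correct $c\in K$ the rational function $cF_{\leq}-F_{<w}$ has negative degree in $X,Y$; the constant $c$ is read off by matching leading coefficients of these two rational functions, and the error at stage $d$ tends to zero. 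There is no quantity accumulating to a $K$-rational constant: both $\zeta$ values are transcendental in $K_\infty$ and ``zetalike'' asserts their \emph{ratio} lies in $K$. The genuine technical obstruction, which the paper identifies explicitly in Remark~(III) after Theorem~3.4 and leaves open, is whether the Frobenius-difference equation $F_{\leq}-g^w F_{\leq}^{(-1)}=F$ is solvable in F-functions. Your ``controlled boundary term'' never formulates, let alone resolves, this equation.

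The inductive step on $k$ is also a gap. You assert that one can ``peel off the outermost Frobenius twist and absorb it into the zeta on the right, reducing to the depth-$(k{+}1)$ statement at a shifted index,'' but there is no depth-reduction identity of that kind for these iterated sums: $S_{<d}((q-1)q^n,\ldots,(q-1)q^{n+k})$ is an iterated sum over strictly decreasing degrees, not a product of a lower-depth iterated sum with a single Frobenius-twisted factor, so the twist cannot simply be factored out and ``absorbed'' into $\zeta(q^{n+k+1}-1)$. Even in the genus-zero case the depth-general statement is not a naive induction on depth; Chen's proof~\cite{C17} uses Anderson--Thakur polynomial identities at the $S_d$ level, which the paper emphasizes fail to survive in higher genus (only leading terms match at the $S_{\leq d}$ level, and nothing at the $S_d$ level). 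You would need at a minimum a lemma comparing the F-function of $S_{\leq d}$ for the depth-$(k{+}2)$ tuple with a Frobenius twist of the one for depth $k{+}1$, and no such lemma is proposed or, as far as I can tell, known.
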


\begin{remarks} (i) For the case of $A=\F_q[t]$'s, following  more explicit form below was conjectured, proved in depth 2  in \cite{LT14} and proved for any depth by Chen in \cite{C17}. 

\begin{align*}
\zeta(q^n-1, (q-1)q ^n, \dotsc, (q-1)q^{n+k})
=
\frac{[n+k] [n+k-1] \dotsm [n]}
{[1]^{q^{n+k}} [2]^{q^{n+k-1}} \dotsm [k+1]^{q^n}  }
\zeta(q^{n+k+1}-1),
\end{align*}
where $[n]=t^{q^n}-t$.

(ii) In genus zero case, there are more such families \cite{LT14}, but in higher genus, our limited 
exploration leads only to the family in the conjecture.   (Of course, we restrict to `primitive'  tuples, 
i.e.  not divisible by  the characteristic). 
\end{remarks} 

 Here are some more explicit conjectures in higher genus, class number one cases, listed above.

\begin{conjecture}  Put $R_n=\zeta( q^n-1, q^n (q-1)) / \zeta ( q^{n+1}-1)$.

For the case (i), we have 
$$R_n= \frac{x^{2^{n+1}}+x^2}{y^{2^{n+1}}+y+x^{2^{n+1}+1}+x}.$$

For the case (ii), we have 
$$R_n=\frac{ (x^{3^n}-x)(y^{3^n}-y)^2 + (x^{3^n}-x) ( -x^{3^n} - x^3-x+1)}{x^{2 * 3^{n+1}} +  x^{1+3^{n+1}} + x^{3^{n+1}} +  y^{1+3^{n+1}} +x^2-x+1}.$$

For the case (iii), we have 
$$R_n=\frac{(x^{4^n}+x)(y^{4^{n+1}}+y^4) + (x^{4^{n+1} }+x^4) \left( x^{4^n +2} +x^3+1 \right) +(x^{4^n}+x) }{ x^{2 \cdot 4^{n+1} +2}  + x^{4^{n+1}} y^{4^{n+1}} + x^{4^ {n+1}}y +x^{4^{n+1}}  +x y^{4^{n+1}}   + xy }.$$

For the case (iv), we have 
$$R_n=\frac{X^{22}+(1+x)(X^{20}+X^{18}+X^{16})+(1+x+x^2)(X^{12}+X^{10})+X^9+LR_n}{X^{24}+xX^{16}+(x+1)X^8+x^2+x},$$
where $X=x^{2^{n-1}}$ and $Y=y^{2^{n-1}}$ and $LR_n=xX^8+X^5
+(Y+y)(X^2+X^4)+x^2+x$.

\end{conjecture}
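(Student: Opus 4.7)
The plan is to extend the polylog-algebraicity framework of \cite{T92} from the depth-one to the depth-two setting, as the introduction signals. For each of the four class number one $A$'s, the recipe of \cite{T92} produces, for integers $k$ of the form $k = q^m - 1$, an F-function, i.e.~a rational function $F_k$ on $X \times X$ (where $X$ is the affine curve), whose specialization at the graph of the $d$-th power Frobenius gives the power sum $S_d(k)$. In particular, $S_d(q^{n+1}-1)$ is handled directly. The first step is to build an analogous depth-two F-function $G_n$ on $X \times X$ whose Frobenius-graph specialization is $S_d(q^n - 1,\, q^n(q-1))$, using that $q^n(q-1) = q^{n+1} - q^n$ so the inner power sum is a Frobenius twist of a depth-one F-function already controlled by \cite{T92}.

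Next I would form the ratio $G_n/F_{q^{n+1}-1}$ at the level of rational functions on $X \times X$ and show that the resulting ratio, once restricted to every Frobenius graph simultaneously, stabilizes to a single rational function depending only on the first coordinate, i.e.~a rational function on $X$. The mechanism here is the one flagged in the introduction: the iterated-sum structure $S_d(k_1, k_2) = S_d(k_1) \sum_{d' < d} S_{d'}(k_2)$ must be absorbed into algebraic relations among F-functions via partial summation identities, and the telescoping that collapses $\sum_d$ must happen uniformly in $d$. Summing over $d$ and passing to $C_\infty$ then gives
\[
R_n \;=\; \frac{\zeta(q^n-1,\, q^n(q-1))}{\zeta(q^{n+1}-1)} \;\in\; K,
\]
proving zetalikeness; what remains is to identify $R_n$ with the explicit rational function listed in Conjecture 3.2 for each of the four curves.

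The main obstacle is the construction and normalization of $G_n$. The depth-one case of \cite{T92} works because the single sum $\sum_{a \in A_d^+} 1/a^k$ admits a neat incarnation on $X \times X$, but the inner partial sum $S_{<d}(q^n(q-1))$ has no a priori rational incarnation; one must either integrate the F-function against a Frobenius-compatible kernel or find a clever closed form exploiting that $q^n(q-1) = q^{n+1} - q^n$ is a difference of $q$-powers. I expect case (i), over $\F_2$ with $y^2+y = x^3+x+1$, to succumb first, since there $q-1=1$ collapses many of the combinatorial coefficients and the denominator $y^{2^{n+1}} + y + x^{2^{n+1}+1} + x$ in the stated $R_n$ factors transparently through the Frobenius-pullback of the curve equation, giving a concrete template for what $G_n$ and the subsequent normalization must produce. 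Extending to cases (ii)--(iv) is then a matter of matching the more elaborate numerator/denominator combinations against the defining equations of each curve, a step which, as the paper notes, may not yet be fully achievable by these techniques for all four families.
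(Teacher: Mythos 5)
The statement you were asked to prove is Conjecture~3.2, and the paper does not in fact prove it: it provides numerical verification (case (i) for $n\leq 11$, case (ii) for $n\leq 9$, etc.)\ and proves only the special cases $n=1$ (and $n=2$ for case (i)) in Theorems~3.3, 5.1, and the appendix. The authors state explicitly, after Theorem~5.1, that they ``would have complete case-by-case algorithmic proof method for the whole family \dots\ if only we are assured of solvability of such equations resulting from our recursion.'' So any honest proposal here must either explain how to guarantee that solvability or acknowledge that the result remains conjectural; your plan does the latter only obliquely in its final sentence.

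More importantly, the central mechanism you describe is the one the paper explicitly shows does \emph{not} work in higher genus. You propose to build an $F$-function $G_n$ on $X\times X$ specializing at Frobenius graphs to $S_d(q^n-1,q^n(q-1))$, and then to show that $G_n/F_{q^{n+1}-1}$ ``stabilizes to a single rational function depending only on the first coordinate'' -- i.e., an exact identity at the $S_d$ level. But Remark~(II) of the paper (``Comparison with the genus zero situation'') states precisely that in higher genus the identity holds \emph{only at the level of $\zeta$}, that only leading terms match at the $S_{\leq d}$ level, and that ``nothing'' works at the $S_d$ level; it even records a rank computation ruling out such binary $S_d$-level relations. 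The actual proofs in the paper proceed differently: they construct $F$-functions for $\ell_d^w S_{\leq d}(\cdot)$ by solving a Frobenius--difference equation $F_{\leq}^{(1)}-(g^{(1)})^w F_{\leq}=F^{(1)}$, construct the corresponding $F$-function for $\ell_d^w S_{<d}(w)$ from the generating-function coefficients $A_{di}$, and then verify that the \emph{difference} of the two (times the conjectured rational constant) has negative degree in $X,Y$, so the error term tends to zero as $d\to\infty$. That is an asymptotic leading-term comparison, not a functional identity on the curve cross itself. Your proposal, as written, would need to be restructured along exactly these lines before it could even reproduce the special cases the paper does handle, and would still leave open the solvability of the difference equations needed for general $n$ and $k$.
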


Note that the fractions in the conjecture are not in the reduced form, and in fact, there is a lot of cancellation (making it hard to guess from numerical data!). Compare for example the reduced forms 
in the special case of the theorems below. We have numerically verified the  case (i) for $n\leq 11$ , and (ii) for $n\leq 9$, (iii)  for $n\leq 5$ and (iv) for $n\leq 12$. 

We also have some more such explicit ratio conjectures, but not yet for large satisfactory families.

 Our main theorems below  prove the conjecture in higher genus, namely the  case (i), 
  when $n=1,2$,   $k=0$. (We have also proved  the case (ii, iii, iv) for $n=1$ with 
  the exact same method, but  choose to give all the  details in the appendix to the main paper).   We will use the theory of \cite{T92} (see also \cite{T04}[Sec. 4.15, 8.2]). 
  The equation references such as (14) in this section all refer to this paper \cite{T92}. 
  
  \begin{theorem}
  For $A=\F_2[x, y]/(y^2+y=x^3+x+1)$, we have 
  
  $$(x^2+x+1)\zeta(1, 2)=\zeta(3).$$
  
  \end{theorem}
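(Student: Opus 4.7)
The plan is to reduce the identity to a finite-level power sum identity that telescopes, and then pass to the $\infty$-adic limit. Concretely, I would aim to prove that for every $d \geq 0$,
$$(x^2+x+1)\, S_d(1,2) - S_d(3) = h_d - h_{d-1},$$
for a sequence $h_d \in K_\infty$ with $|h_d|_\infty \to 0$, so that summation over $d$ collapses the right-hand side and yields $(x^2+x+1)\,\zeta(1,2) = \zeta(3)$.

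To obtain the finite-level identity, I would exploit three features of the curve $A = \F_2[x,y]/(y^2+y-x^3-x-1)$. First, since the characteristic is $2$, the $\F_2$-linearity of squaring gives $S_i(2) = S_i(1)^2$ for every $i$, and hence $S_{<d}(2) = S_{<d}(1)^2$ and $S_d(1,2) = S_d(1)\cdot S_{<d}(1)^2$; this reduces the problem to an expression in the depth-one power sums $S_i(1)$ together with $S_d(3)$. Second, in the class-number-one setting of \cite{T92} one has explicit F-functions on the curve cross itself whose specializations at the graph of the $2^d$-th power Frobenius realize $S_d(1)$; one then extends the construction to cover $S_d(3)$ and, more seriously, combines factors so as to access the product $S_d(1)\cdot S_{<d}(1)^2$, which is where genuinely new log-algebraicity work beyond depth one is required. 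Third, reduction modulo the defining equation $y^2+y = x^3+x+1$ (and its Frobenius iterates) is what produces the specific coefficient $x^2+x+1$; this plays the role analogous to $[1]=t^2-t$ in the $\F_2[t]$ version of the identity recorded in Remark~(i) after Conjecture~3.1, but arises by a different algebraic mechanism, consistent with the paper's warning that the higher-genus proofs look structurally unlike those in genus zero.

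The principal obstacle is exactly this construction: one must produce an algebraic combination on the curve (or Hilbert cover) whose specialization at the graph of $\sigma^d$ gives the combined quantity $(x^2+x+1)\,S_d(1)\,S_{<d}(1)^2 - S_d(3)$ in the telescoping form $h_d - h_{d-1}$. Once such a combination is in hand, verifying the identity becomes a finite algebraic manipulation in the function field modulo the curve equation, and the bound $|h_d|_\infty \to 0$ follows from a degree count using the known pole orders of the F-functions at $\infty$. The delicate point is exhibiting the boundary as the difference of a \emph{single} sequence rather than a two-variable error term: this is where I expect the proof to exploit a low-weight coincidence specific to $(1,2)$ on this particular curve, rather than a fully general mechanism available for all tuples in the conjectural family.
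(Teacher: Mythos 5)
Your overall plan matches the paper's strategy: work with F-functions on the curve cross itself, reduce $\zeta(1,2)$ to the depth-one data via $S_d(1,2)=S_d(1)\,S_{<d}(1)^2$ in characteristic $2$, and conclude by a degree count at $\infty$. However, two points of your diagnosis diverge from how the proof actually goes, and one crucial technical ingredient is missing.

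First, the telescoping reformulation is logically correct but not where the difficulty lies. The sequence $h_d$ must satisfy $h_d-h_{d-1}=(x^2+x+1)S_d(1,2)-S_d(3)$, and (up to a harmless index shift) this forces $h_d$ to be the partial-sum error $(x^2+x+1)S_{\leq d}(1,2)-S_{<d}(3)$ --- there is no freedom, and no ``coincidence'' in producing it. What the paper must produce is an \emph{F-function} $F_{\leq 12}$ on $C\times C$ with $F_{\leq 12}(d)=\ell_d^3 S_{\leq d}(1,2)$, where $\ell_d$ is the $d$-th reciprocal logarithm coefficient. The $\ell_d^3$-normalization is essential (the paper flags explicitly that without it the denominators of the interpolating rational functions fail to match), and you do not mention it. The function $F_{\leq 12}$ is obtained by solving a Frobenius-difference equation $F_{\leq 12}^{(1)}-(g^{(1)})^3F_{\leq 12}=F_{12}^{(1)}$; once this is in hand, the proof is a straight degree comparison between $(x^2+x+1)F_{\leq 12}$ and $F_{<3}$, showing the difference has negative degree in $(X,Y)$. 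Your ``principal obstacle'' is thus not a telescoping coincidence but the solvability of that difference equation, which the paper poses as an open problem for the general family.

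Second, your expectation that this is ``a low-weight coincidence specific to $(1,2)$ on this particular curve'' is off: the paper carries out the identical procedure for $(3,4)$ on the same curve and for the corresponding weight-$(q^{n+1}-1)$ cases on all four higher-genus class-number-one curves in the appendix, and frames the mechanism as case-by-case algorithmic. There \emph{is} a structural contrast with genus zero, but it is the opposite of what you guess: in genus zero the identity holds at the finite level $S_d$ (so telescoping is literal and exact), while in higher genus there is provably no $F$-function relation at the $S_d$ level; only the leading terms of the cumulative $S_{\leq d}$ match. That is the ``coincidence'' the paper highlights, and it is a weaker, not a stronger, property than what you posited.
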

  
 \begin{proof} 
 
  We first give some explicit `(poly-)log-algebraicity' formulae developed in \cite{T92}
  for the relevant power sums,  and in  \cite{T09} for the iterated versions, together with one extension 
  needed for the multizeta case. These give expressions for polylog-coefficient $\ell_d^w$ times
  power sums $S_d(k_1, \cdots, k_i)$'s in terms of algebraic functions, on the curve (corresponding to $K$) cross itself,  specialized at the graph of $d$-th power of Frobenius map. 
  
  We will now define several functions in $\F_2(x, y, X, Y)$, where $x$ and $X$ are independent transcendentals and $y^2+y=x^3+x+1, Y^2+Y=X^3+X+1$. For each such function, say $f$, 
  put $f^{(1)}$ for the function resulting from $f$ after substituting $X^2, Y^2$ respectively for 
  $X, Y$, and put $f(d)\in K$ for the function resulting from $f$ after substituting 
  $x^{2^d}, y^{2^d}$ for $X, Y$. 
 (More generally, we say \cite{T92}, in class number one case,  that a function $f: \Z_{>c}\rightarrow C_{\infty}$ is $F$-function, if there is a rational  function $F$ on $C$ cross itself  
such that $F$ specialized to the graph of $d$-th power of Frobenius on $C$ is $f(d+k)$ (for sufficiently large $d$, fixed $k$).)

  Put 
  
  $$B_x=X+x, \ \ \ B_y=Y+y,\ \ \ g=\frac{B_y+XB_x}{B_x^{(1)}+1},\ \ 
  F_1=\frac{X+x^2}{B_y+xB_x+x^2+x},$$
  
  $$ F_{<1}=g^2+F_1^2+F_1,\ \ \ F_{12}=F_1F_{<1}^2, \ \ \  F_3=F_1^2(g^2+F_1^2),$$
  
  $$g_m= \frac{Y^2+y^4+X^2(X^2+x^4)}{X^4+x^4+1},\  \ \ A_2=F_1(\frac{g^4g_m}{x^2+x}+(g^2+F_1^2+F_1)^2(F_1^2+F_1)),$$ 
  
  $$F_{<3}=\frac{A_2}{F_1}+(g^2+F_1^2+F_1)^3,$$
  
  $$C=B_y+xB_x+x^2+x,C_m=Y+y^2+x^2(X+x^2)+x^4+x^2,$$  
  
  $$U=\frac{(X+x^2)(X^3+X^2x)}{x^2+x}+
  \frac{X^2}{x}+Xx+1,\ \ \ J=\frac{U+C^2}{1+((g^{(1)})^3C^2F_{12})/C_mF_{12}^{(1)})},$$
  
  $$F_{\leq 12}=\frac{JF_{12}}{C_m}.$$

  \begin{theorem} For $A$ as in the previous theorem and for $\ell_d$ the 
  (reciprocal) coefficients of logarithm for Hayes module for this $A$,  for 
  $d\geq 2$ (check d=0, 1, 2) we have 
  (i) $\ell_dS_d(1)=F_1(d)$, (ii) $\ell_dS_{<d}(1)=F_{<1}(d)$, \\
  (iii) $\ell_d^3S_d(1, 2)=F_{12}(d)$, (iv) $\ell_d^3S_d(3)=F_3(d)$, \\
  (v) $\ell_d^3S_{<d}(3)=F_{<3}(d)$ and (vi) $\ell_d^3S_{\leq d}(1, 2)=F_{\leq 12}(d)$. 
  \end{theorem}
  
  \begin{proof}
 We use the generating functions $A_{d0}/(1-\sum A_{di}t^{q^i})=\sum S_d(k)t^{k-1}$ of 
 \cite{T92}[(18)] for $S_d(k)$ and $(A_{d0}x)(\sum A_{di}x^{q^i})^{-1}=
 1+\sum S_{<d}(k) x^k$ of \cite{T09}[3.2] for 
 $S_{<d}(k)$ given by one type of binomial 
 coefficient \cite{T92, T09}, and the method of \cite{T92} to calculate this in higher genus. 
 (Here $i$ runs from $0$ to $d$ and $k$ from $1$ to $\infty$, and $S_{<d}(k)=0$ unless 
 $k$ is $q$-even.)

  We first explain briefly, how (i)-(v) follow from theory developed in 
  \cite{T92}, by unwinding and specializing 
  the genus one formulas there to our specific $A$. Note the notations 
  matches  $B_x(i)=[i]_x, B_y(i)=[i]_y$. Once one uses the known coefficients of $\rho$  (see Exa. C page 192 
  of \cite{T92}) to get $x_1=x^2+x, y_1=y^2+y, y_2=x(y^2+y)$,  the recursions 
  for $\ell_i, d_i$ (and this $a_{ik}$ by (7), (14) of \cite{T92}) from the functional equations of logarithm, exponential in  terms of $\rho$, 
  give formulas (we use $\ell_1=1, d_1=1$ in particular) for $f_i, g_i, \mu_i$ in (20), (27), (28), (23) 
  of \cite{T92} implying in particular that  $g(i)=g_i=\ell_i/\ell_{i-1}$. This allows us to calculate 
  $A_{i0}, A_{i1}, A_{i2}$ of (14) by comparing $t, t^q, t^{q^2}$ coefficients in (21) (see (14), (7)),  which is all we need from the generating function coefficients. (In fact, $A_2(d)=\ell_d^4A_{d2}$ 
  and $(g^2F_1+F_1^3+F_1^2)(d)=\ell_d^2A_{d1}$.)
  Then we 
  need only to verify by straightforward manipulation that (we note here that $g_m(d)=g(d-1)^4$, 
 $C_m(d)=C(d-1)^2$)
  $$\ell_d S_d(1)=\ell_dA_{d0}=F_1(d),$$
  
  $$\ell_dS_{<d}(1)=\frac{\ell_d^2A_{d1}}{\ell_dA_{d0}},$$
  
  $$\ell_d^3S_d(3)=
  \ell_dA_{d0}(\ell_d^2A_{d1}+\ell_d^2A_{d0}^2)=F_3(d),$$
  
  $$\ell_d^3S_d(1, 2)=(\ell_dS_d(1))(\ell_dS_{<d}(1))^2=(\ell_dA_{d0})(\frac{\ell_d^2A_{d1}}{\ell_dA_{d0}} )^2=F_{12}(d),$$
  
  $$\ell_d^3S_{<d}(3)= \frac{\ell_d^4A_{d2}}{\ell_dA_{d0}}+(\frac{\ell_d^2A_{d1}}{\ell_dA_{d0}})^3             =F_{<3}(d).$$

  Finally, we verify (vi) by induction on $d$, using (ii) and the iterated definition: it is 
  enough to check the initial value and the identity  corresponding  to $S_{\leq d+1}-S_{d+1}=S_{\leq d}$.  Since 
  $g^{(1)}(d)=\ell_{d+1}/\ell_d$, the identity is 
  $F_{\leq 12}^{(1)}- F_{12}^{(1)} = (g^{(1)})^3F_{\leq 12}$, which follows directly. 
 
\end{proof}

Now it is easy to finish the proof of the main theorem by just noticing that 
$(x^2+x+1)F_{\leq 12}-F_{<3}$ has negative degree in $X, Y$, so that 
as $d$ tends to infinity, the `error' $(x^2+x+1)S_{\leq d}(1, 2)-S_{<d}(3)$ tends to 
zero, establishing the theorem. 
  \end{proof}

  \section{Some remarks} 
(I)   {\bf Explicit F-functions in standard forms:} To get more concrete perspective, we give some of these functions more explicitly. 
We split lower order part of numerators just for the display convenience.

  $$F_3 = \frac{X^3+x^{2} X^{2} + Y+X+x^{3}  + x^{2} + y+x  + 1}{X^{4} + x^{2} + 1}.$$

  $$F_{12} = \frac{X^4Y+xX^5+X^3Y+(y+1)X^4 +x^2X^2Y+ (x^3+y+x)X^3+(x+1)XY+L_{12}}{X^6 + (x+1) X^4 + (x^2+1) X^2 + x^3 + x^2+  x + 1}$$
with $L_{12}=(x^2y+x^3+x^2+x)   X^2 +x^3Y+(x^4+xy+x^2+y+x)X+x^3y+x^4+x^2$,

$$F_{<3} = \frac{(x^2+x+1)X^6+X^5+(x^4+  x^3+x^2+x)X^4 +X^2Y+(x+1)X^3 + L_{<3} }{(x^2+x)X^{6} +(x^3+x)X^4+(x^4+x^3+x^2+x) X^2+x^5+x}$$
with $L_{<3}=(x^4+x^3+x^2+y)X^2+xY+xX+x^6 +x^5 +x^3 +xy+x^2 $

$$F_{\leq 12}=\frac{X^6+(x^2+x+1)X^5+(x^3+x^2) X^4+(x^2+x+1)X^2Y+ L_{\leq 12} }{(x^2+x) X^6 +(x^3+x)X^4+(x^4+x^3+x^2+x)X^2+ x^5+x}$$
with $L_{\leq 12}=  (x^3+x^2+x+1)X^3 +(x^2y+x^3+ xy+y)X^2+x^3Y+x^3X+x^5 +x^3 y+x^3$.

Note that the denominators are $(X^2+x+1)^2$, $(X^2+x+1)^3$, $(x^2+x)(X^2+x+1)^3$ (twice) 
respectively. 

Comparing the dominating terms of the last two expressions, makes visible the last calculation 
of the proof above. 
  
 (II) {\bf Comparison with the genus zero situation:} For the genus zero case $A=\F_q[x]$, we have \cite{T09, T04} the $F$-function identity 
 $S_d(q-1, q(q-1))=S_{d-1}(q^2-1)/(t-t^q)^{q-1}$, which by summing over degrees up to $d$ then gives 
 corresponding $F$-function identity for $S_{\leq d}$,  and then,  by taking the limits, the  identity at the 
 multizeta  level. The same is true in any depth by the formula for $S_n(d)$ in the proof on page 795 in 
 \cite{LT14}. On the other hand, in our case here, we have the identity only 
 at the level of $\zeta$, only leading terms matches at $S_{\leq d}$ level, and nothing 
 at $S_d$ level! In fact, for $q=2$ case above
 (for example, by the theorem 2 and (I)), for $d>2$, the degree of both $S_d(1, 2)$ and $S_{d-1}(3)$ is 
 $-2^{d}$.   The  degree of $S_d(1, 2)+S_{d-1}(3)+(x^2+x)S_{d+1}(1, 2)$ is 
 $-2^{d+1}$, and the degree of $(x^2+x+1)S_{\leq d}(1, 2)+S_{<d}(3)+(x^2+x)S_{d+1}(1, 2)$ 
 is $-2^{d+2}$. 
 
 Consider the genus zero zetalike Euler basic identity $\zeta(1, q-1)=\zeta(q)/(t-t^q)$, 
 which is not eulerian,  if $q>2$. It corresponds to $F$-function identity at $S_d$  level, we do not think that the resulting 
 identity at $S_{\leq d}$ level obtained by summing is $F$-function identity. 
 
We have checked (by computing the rank of the relevant matrices) that in the case (i), there is no non-trivial linear relation (leading to our theorem by summing) between the $8$ quantities 
$S_k(3), S_k(1, 2), S_k(2, 1), S_k(1, 1,1)$'s,  with $k=d$ or $d+1$, of weight $3$, 
(working for all $d$, or equivalently working for the corresponding F-functions), in contrast to the existence of such `binary' relations \cite{To18}.  We have also checked that the same situation persists, even if we add 
$S_{d+2}(1, 2), S_{d-1}(3), S_{d+2}(3)$, but have not tried adding more terms. 
Similarly, we have checked that there is no non-trivial linear relation (leading to our theorem by summing) between the $10$ quantities $S_k(1, 2), S_m(3)$, with $d\leq m\leq d+4, d<k\leq d+5$, 
again in contrast to the genus zero situation \cite{To18}. (See also \cite{T17}[Pa. 17-18] for relevant discussion 
of Todd's data.)

Since the $F$-functions involved in $S_d$-level identities were used to define \cite{AT90, AT09} 
the 
corresponding motives in the genus zero case, we need to understand better the motivic mechanisms underlying these relations in higher genus.

(III)  {\bf Comparison of polylog-algebraicity for iterated sums and zetalike property:}  As  mentioned above,  for a positive integer $k$ and  a positive $q$-even integer $m$, 
$\ell_d^kS_d(k)$ and $\ell_d^mS_{<d}(m)$ are $F$-functions \cite{T92, A94, T09, GP18} (we say alternately that 
$S_d(k)$ and $S_{<d}(m)$ satisfy `log-algebraicity' property). 

In our situation, if weight $w=\sum s_i$ is $q$-even, and if $\ell_d^wS_{<d}(s_1, \cdots, s_r)$
is $F$-function, then as in the proof above, comparison of leading terms shows that 
$\zeta(s_1, \cdots, s_r)$ is zetalike (equivalently, eulerian, in this case). 

 For simplicity, let $s_1, s_2$ be $q$-even, $w=s_1+s_2$. If $F_{\leq}$ is the $F$-function 
for $\ell_d^wS_{\leq d}(s_1, s_2)$, and if $F$ is the $F$-function representing 
$\ell_d^wS_d(s_1, s_2)$, and $g$ represents the $F$-function $\ell_d/\ell_{d-1}$, 
then $F_{\leq}(d)-F(d)=g(d)^{w}F_{\leq}(d-1)$, so to get such $F_{\leq}$ from 
(known) $F$ and $g$, we need to solve the Frobenius-difference equation 
$F_{\leq}-g^{w}F_{\leq}^{(-1)}=F$. When exactly is it solvable? If this is understood, the method 
explained in the next section to solve it
 should  then give (case-by-case) proofs for such multizeta relations through directly verifiable relations 
between such functions. 

(IV) {\bf Origin/explanation of some functions introduced in the proof:} For interested reader, we indicate how the formula (vi) was discovered (without 
knowledge of such algorithm). To guess what $F_{\leq 12}$ should be,  comparison 
of the LHS of (vi) with $F_{12}(d)$ was made (for small $d$'s) and factored to notice 
match of denominators, so their ratio was considered.  (Note that without the factor  
$\ell(d)^3$, the relevant denominators do not match!).  Again consideration of factors suggested 
that primes in denominators came from those of $F_{12}(d-1)$. Now the expressions  show that 
cube of $C(d)=[d]_y+x[d]_x+[1]$ kills denominator of $F_{12}(d)$, here the square was enough 
so  polynomials $E(d)=C(d-1)^2\ell_d^3S_{\leq d}(1, 2)/F_{12}(d)$ were calculated for a few $d$'s  
and it was noticed that except for the constant term which alternated between $0$ and $1$, 
the tail of $E(d)$ matched with $E(d-1)$, so the recursion between $E(d)$ and $E(d-1)$ 
was considered as F-polynomials are easy to guess explicitly (using geometric series). 
This led to function $U(d)$ satisfying $E(d)=U(d-1)+E(d-1)$.  
Next we consider equation coming from the relation  $S_{\leq d}=S_d+S_{\leq d-1}$, 
which, after a simple straight manipulation, translates to 
$E(d)=C(d-1)^2+E(d-1) g(d)^3C(d-1)^2F_{12}(d-1)/(C(d-2)^2F_{12}(d))$.
Solving these two equations  led to 
F-function expression for $E$ and thus for $F_{\leq 12}$. For more streamlined version developed later, see the next section.

  (V) {\bf Structure behind the explicit conjecture:} In the notation of the theorem, the first depth 2 explicit conjecture is  $\zeta(q^n-1, (q-1)q^n)/\zeta(q^{n+1}-1)=[n]^2/C(n+1)$. We have similar but more involved descriptions for the rest.
  We remark that the denominators listed above in each case are (Frobenius twists of) denominators of F-function $F_1$ satisfying $\ell_dS_d(1)=F_1(d)$.
  
(VI) {\bf Low $F_i$'s:}  If $F_k$ denotes the F-function with $F_k(d)=\ell_d^kS_d(k)$, then 
for $F_{mp^n}=F_1^{mp^n}$, for $m\leq q$, by $p$-th powers and $\F_q$-linearity 
and power sums-symmetric sums  argument \cite{T14}[Remark 6.1].

  \section{Another case}
  
  In order not to make the theorem and proof any more complicated by combining too many formulae at once, we decided to state the 
  second case separately as the following theorem.

  \begin{theorem}
  For $A=\F_2[x, y]/(y^2+y=x^3+x+1)$, we have 
  
  $$(x^8+x^6+x^5+x^3+1)\zeta(3, 4)=(x^4+x^2)\zeta(7).$$
  \end{theorem}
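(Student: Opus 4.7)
The plan is to imitate the strategy of the preceding main theorem one weight higher. Write $F_k, F_{<k}, F_{k_1 k_2}, F_{\leq k_1 k_2}$ for the F-functions representing $\ell_d^{k}S_d(k)$, $\ell_d^{k}S_{<d}(k)$, $\ell_d^{k_1+k_2}S_d(k_1,k_2)$ and $\ell_d^{k_1+k_2}S_{\leq d}(k_1,k_2)$ respectively. Since $q=2$, Frobenius linearity of power sums gives $S_d(4)=S_d(1)^{4}$ and $S_{<d}(4)=S_{<d}(1)^{4}$, so we may take $F_{4}=F_{1}^{4}$ and $F_{<4}=F_{<1}^{4}$ with $F_1, F_{<1}$ already built in the previous section. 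From the iterated definition $S_d(3,4)=S_d(3)\cdot S_{<d}(4)$ we then set
\[
F_{34}:=F_3\cdot F_{<1}^{4},\qquad F_{34}(d)=\ell_d^{7}S_d(3,4).
\]

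Next I would construct $F_{<7}$ from the generating function $(A_{d0}x)/\sum_i A_{di}x^{q^i}$ of \cite{T09}[3.2] by reading off the coefficient of $x^{7}$ (every positive integer is $q$-even when $q=2$). This is a direct extension of the computations behind parts (ii) and (v) of the F-function theorem of the previous section: one needs only $A_{di}$ for $i\le 3$, produced by comparing coefficients of $t^{q^i}$ in \cite{T92}[(21)] after specializing the recursions for $\ell_i, d_i$ to the Hayes-module coefficients $x_1=x^2+x$, $y_1=y^2+y$, $y_2=x(y^2+y)$ already used there.

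The heart of the proof is the construction of $F_{\leq 34}$, required to satisfy the Frobenius-difference equation
\[
F_{\leq 34}^{(1)}-F_{34}^{(1)}=(g^{(1)})^{7}\,F_{\leq 34}
\]
corresponding to $S_{\leq d+1}(3,4)=S_{d+1}(3,4)+S_{\leq d}(3,4)$ (cf.\ Remark~(III)). I would follow the heuristic of Remark~(IV): tabulate $\ell_d^{7}S_{\leq d}(3,4)/F_{34}(d)$ for small $d$, factor out an appropriate power of $C(d-1)$ (the denominator-killer of $F_1$), look for a recursion of the form $E(d)=U(d-1)+(\text{twist})\cdot E(d-1)$ coming from $S_{\leq d}=S_d+S_{\leq d-1}$, extract $U$ from a short geometric tail, and solve for $F_{\leq 34}$ in closed form. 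Once guessed, the identity is verified algebraically by direct manipulation.

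Finally, with $F_{\leq 34}$ and $F_{<7}$ in hand, it suffices to check that the rational function
\[
(x^{8}+x^{6}+x^{5}+x^{3}+1)\,F_{\leq 34}-(x^{4}+x^{2})\,F_{<7}
\]
has strictly negative degree in $X,Y$. Upon substituting $X=x^{2^d}, Y=y^{2^d}$, its value then decays doubly-exponentially in $d$, and dividing by $\ell_d^{7}$ only enhances this decay, so the error $(x^{8}+x^{6}+x^{5}+x^{3}+1)S_{\leq d}(3,4)-(x^{4}+x^{2})S_{<d}(7)$ tends to zero as $d\to\infty$, and passing to the limit establishes the asserted identity. The main obstacle, as in the previous section, is the middle step: there is no general criterion (Remark~(III)) for solvability of the Frobenius-difference equation for $F_{\leq 34}$, so the explicit closed form must be discovered from numerical experimentation and then verified by hand.
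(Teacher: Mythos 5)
Your plan matches the paper's proof step for step: exploit $S_d(4)=S_d(1)^4$ to set $F_{34}=F_3F_{<1}^4$, read off $F_{<7}$ from the generating functions (using $A_{di}$ for $i\le 3$), solve the Frobenius-difference equation $F_{\leq 34}^{(1)}-(g^{(1)})^7F_{\leq 34}=F_{34}^{(1)}$ via an explicit polynomial ansatz (the paper does this in SageMath rather than via the Remark~(IV) heuristic, but that is only a matter of discovery, not of proof), and finish by comparing leading terms. One small correction to the closing sentence: in these higher-genus cases $\ell_d$ has \emph{negative} degree (see the explicit degree formulas in the appendix), so dividing the error by $\ell_d^7$ \emph{attenuates} rather than enhances the decay; the conclusion still holds because the degree of the $F$-function difference is negative enough that $\deg\bigl(E(d)/\ell_d^7\bigr)\to-\infty$ anyway.
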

  
 \begin{proof} 
 We proceed in a similar way to the proof of the first case. In fact, the functions 
 interpolating $\ell_d^3S_d(3), \ell_d^3S_{<d}(3)$, are already computed and 
 since $S_d(2^ns)=S_d(s)^{2^n}$, the similar interpolating functions for $s=4$ 
 are just fourth powers of the functions we calculated above for $s=1$. 
 
 This gives $F_{34}$ such that $\ell_d^7S_d(3, 4)=F_{34}(d)$. Here it is explicitly:
 $F_{34}=N_{34}/D_{34}$, where $D_{34} =(X^2+x+1)^6$ and 
 \begin{align*}
N_{34}& =  X^{11} + x^{2} X^{10}  + \left(x^{3} + x + 1+y\right) X^{8} +
\left(x^{4} + 1\right) X^{7} +
 \left(x^{6} + x^{3} + x^{2} + x + y\right) X^{6} \\
&\quad  +
 \left(x^{4} + x^{2} + 1\right) X^{5} +\left(x^{7} + x^{6} + x^{5} + x^{4} + x^{2}+yx^4 \right) X^{4} 
 + 
 \left(x^{6} + x^{2} + 1\right) X^{3}\\
 &\quad  + 
 \left(x^{8} + x^{5} + x^{4} +  x^{2} + x + y(x^2+1)\right) X^{2}+ x^{6} X  + x^{9} + x^{8} + x^{7}  + x^{4} +yx^6
 \\
 &\quad  +Y\left[ X^{8} + X^{6} + x^{4} X^{4} + \left(x^{2} + 1\right) X^{2} + x^{6}      \right].
 \end{align*}

 We claim that  $F_{\leq 34}=N_{\leq 34}/D_{\leq 34}$ satisfies $\ell_d^7S_{\leq d}(3, 4)=F_{\leq 34}(d)$, where $D_{\leq 34}=(x^6+x^5+x^4+x^3+x^2+x+1)(X^2+x+1)^7$ and $N_{\leq 34}$ is
 \begin{align*}
&\left(x^{2} + x\right) X^{14} + 
\left(x^{2} + x\right) X^{13} + 
\left(x^{6} + x^{5} + x^{4} + x^{3} + x^{2} + x + 1\right) X^{12} \\ 
& +
\left(x^{6} + x^{5} + x^{4} + x^{2} + 1\right) X^{11} +
\left(x^{8} + x^{7} + x^{6} + x^{5} + x^{4}  + x^{2} + x+y(x^2+x)\right) X^{10} \\ 
& + \left(x^{8} + x^{4} + x^{3} + x^{2}\right) X^{9}
 +
\left(x^{8} + x^{7} + x^{6} + x^{5}  + x^{3} + y(x^6+x^5+x^4+x+1)\right) X^{8} \\ 
& +
\left(x^{9} + x^{7} + x^{6} + x^{4} + x^{3}\right) X^{7} \\
& +
\left(x^{9}+ x^{7}  + x^{5} + x^{2}  + x +1+ 
y (x^8+x^6+x^5+x^3+x^2+x + 1)\right) X^{6}  \\
& +
\left(x^{10} + x^{9} + x^{8} + x^{4} + x^{2} + x + 1\right) X^{5}\\
& +
\left( x^{7} + x^{6}  + x^{5}  + x^{3} +x+1+
+y(x^9+x^8+x^7+x^4+ x^2+ x + 1)\right) X^{4} \\
& +
\left(x^{11} + x^{10} + x^{9} + x^{7} + x^{5} + x^{4}\right) X^{3} \\
& +
\left(x^{12}  + x^{10} + x^{9} + x^{8} + x^{6} + x^{5} + x^{4}+y(x^{10}+x^7+x^5)\right) X^{2} \\
& +
\left(x^{11} + x^{9} + x^{4}\right) X + x^{10} + x^{8} + x^{7} + x^{6} + x^{5}  + x^{4}+y(x^{11}+x^9+x^4)\\
& + Y\left[\left(x^{2} + x\right) X^{10} + \left(x^{6} + x^{5} + x^{4} + x + 1\right) X^{8} +
\left(x^{8} + x^{6} + x^{5} + x^{3} + x^{2} + x + 1\right) X^{6}\right]\\
& +
Y\left[\left(x^{9} + x^{8} + x^{7} + x^{5} + x^{4} + x^{2} + x + 1\right) X^{4} +
\left(x^{10} + x^{7} + x^{5}\right) X^{2}  +x^{11}+x^{9} + x^{4}                 
\right]. 
\end{align*}

 This is proved by straightforward verification of the initial condition and the recursion identity 
 $ F_{\leq 34}^{(1)}- (g^{(1)})^7F_{\leq 34}= F_{34}^{(1)}$.
 
 Finally, similar methods as in the proof of the first case, gives $F_{<7}$ satisfying 
 $F_{<7}(d)=\ell_d^7S_{<d}(7)$ as follows

 $F_{<7} = N_{<7} /D_{<7}$, where $D _{<7} = (x^8+x)  (X^{2} + x + 1)^{7}$ and $N_{<7}$ is
 \begin{align*}
 \begin{split}
& \left(x^{8} + x^{6} + x^{5} + x^{3} + 1\right) X^{14}  + 
\left(x^{8} + x^{6} + x^{5} + x^{3} + 1\right) X^{13} \\ 
& + 
\left(x^{8} + x^{6} + x^{5} + x^{4} + x^{3} + x^{2} + 1\right) X^{12}  + 
\left(x^{9} + x^{7} + x^{6} + x^{2} + x\right) X^{11} \\
  & + 
\left(x^{12} + x^{10} + x^{4} + x^{3} y + x^{2} + y(x^8+x^6+x^5+x^3+1)\right) X^{10} +
\left(x^{12} + x^{8} + x^{7} + x^{4} + x^{3}\right) X^{9} \\ 
& + 
\left(x^{12} + x^{8}  + x^{7} + x^{5}  + x^{2} + y(x^9+x^8+x^7+x^5+x^3+x^2+x+1)\right) X^{8} \\ 
& + 
\left(x^{13} + x^{10} + x^{6} + x^{5} + x^{4} + x^{3} + x\right) X^{7} \\ 
 & + 
\left(x^{13} + x^{12} +  x^{9} + x^{7} + x^{5}  + x^{3}  + y(x^{12}+x^9+x^5+x^4+x^2+x +1)\right) X^{6} \\ 
& + 
\left(x^{14} + x^{13} + x^{12} + x^{11} + x^{9} + x^{6} + x^{4} + x^{3} + x^{2}\right) X^{5} \\ 
&  + 
\left(x^{14} + x^{13} + x^{12} + x^{11} + x^{9} + x^{8} + x^{3} y + x^{3}   + y(x^{13}+x^{12}+x^{10}+x^9     x^6+x^2+ 1)\right) X^{4} \\ 
& + 
\left(x^{15} + x^{12} + x^{11} + x^{8} + x^{6} + x^{4} + x^{3} + x^{2} + x + 1\right) X^{3} \\
& + 
\left(x^{16} + x^{15} + x^{14} + x^{13}  + x^{10} + x^{9} + x^{8} + x^{7} + x^{6}
 + x^{5} + x^{4} + y(x^{14}+x^{11}+x^{10}+x^4 +1)\right) X^{2} \\ 
& + 
\left(x^{15} + x^{14} + x^{12} + x^{10} + x^{8} + x^{6} + x^{3} + x^{2} + x\right) X \\ 
& + 
  x^{15} +  + x^{14} + x^{12}  + x^{10} + x^{9} + x^{8} + x^{7} + x^{5} + x^{4} + x^{2}\\
  & +  y( x^{15}+x^{14}+x^{12}+  x^{10}+x^8+       x^6 +x^3+x^2+x)\\
& + 
Y\left[\left(x^{8} + x^{6} + x^{5} + x^{3} + 1\right) X^{10} +\left(x^{9} + x^{8} + x^{7} + x^{5} + x^{3} +x^2+x+1\right)X^8\right]\\
& +
Y\left[ 
\left(x^{12} + x^{9} + x^{5} + x^{4} + x^{2} + x + 1\right) X^{6} + 
\left(x^{13} + x^{12} + x^{10} + x^{9} + x^{6} + x^{3} + x^{2} + 1\right) X^{4} \right]\\
& +
Y\left[  
\left(x^{14} + x^{11} + x^{10} + x^{4} + 1\right) X^{2} + 
x^{15} + x^{14} + x^{12} + x^{10} + x^{8} + x^{6} + x^{3} + x^{2} + x 
\right].
\end{split}
 \end{align*}

 The proof is thus complete, as before,  by observing the ratio of the leading terms of $F_{<7}$ and 
 $F_{\leq 34}$ is exactly (after simple cancellations) $(x^8+x^6+x^5+x^3+1)/(x^4+x^2)$. 
  \end{proof}
  
  \begin{remarks}
 (1) We solved by using SageMath,  the Frobenius-difference equation 
  $$(X^2+x+1)^7[Z^{(1)}-(X^4+x+1)N_{34}^{(1)}]=(Y+y+X^4+X^3+X^2x+X+1)^7Z,$$
  where $Z=\sum_{k=0}^{14}a_kX^k+Y \sum_{m=0}^{12} b_mX^m$, by using the elliptic curve relation 
  to get rid of higher powers of $Y$ and then equating coefficients 
  of $X^n$ and $YX^m$, 
  for $0\leq n\leq 39, \  0\leq m\leq 38$ in the resulting linear system in 26 unknowns
 $a_i, b_i$. The  unique solution obtained, in fact, proves the recursion relation. (We note here that $Z=N_{\leq 34}/(x^6+x^5+x^4+x^3+x^2+x+1)$.)
  
  (2) We would have complete case-by-case algorithmic proof method for the whole family (at least 
  in depth 2 and probably in general by induction on depth), if only 
  we are assured of solvability of such equations resulting from our recursion.
  \end{remarks}

  \section{Dedekind type relative zeta situation}
  
  We also consider  Dedekind type relative zeta and multizeta functions using norms from 
  $A$ to some corresponding $\F_q[x]$, say and explore corresponding zetalike multizetas. 
  
  More precisely, for a monic $a\in A$, we use the monic generator of $-k$-th power of 
  the relative norm of $a$. For the class number one situation, this corresponds more closely to 
  the Dedekind zeta. See \cite{T04}[Sec. 5.1].
  
  First note that if the relative extension is Galois of degree $p$ and $A$ is class number 
  one, then (argument of  \cite{T04}[Pa 162] generalized to power sums) for an element of $A-\F_q[x]$, the $p$ conjugates having 
  the same norm, the total norm contribution is zero, where as for an element in the base, 
  the norm is $p$-th power, so  $\zeta_{A/\F_q[x]}(s_1, \cdots, s_r)
  =\zeta_{\F_q[x]}(ps_1, \cdots, ps_r)$. In particular,  we get  zetalike elements just from the genus zero case. 
  This works for 
 the three class number one examples $A$ with $p=2$, which are quadratic 
  over $\F_q[x]$ of the 
  form $y^2+y=P(x)$, and the fourth class number one example with $q=3$ of 
  the form $y^2=x^3-x-1$ considered as a cubic Galois extension (since 
  $\F_3$ translations of a root are roots) over $\F_3[y]$.

  Considered the  class number one examples above of characteristic 2,  as extensions of the relevant $\F_q[y]$'s, we did not find any zetalike examples, 
  in numerical experimentation.   
Similarly, for $q = 3, y^2 = x^3-x-1$, and $Norm (f+yg) = f^2 - y^2g^2$, we have not yet 
found any zetalike examples.

  We are in the beginning stages of exploration in the general relative situation and will report 
in the future paper about more refined conjectures on degrees,   other F-functions and relations.

For now, we only make following simple remark that  in higher genus, some power sums are zero, 
not only as the relevant sets are empty because of Weierstrass gaps (at the point at infinity), but 
also  power sums can be zero, even if the relevant sets
are not empty. For example,  consider $\F_2[x, y]/y^2+y=x^3+x+1$ over 
$\F_2[y]$. In this case, since the norm of $x$ as well as of $x+1$ is $y^2+y+1$, all 
the power sums for degree $2$ also (for degree $1$ they vanish for the reason above) vanish.

  \section{Numerical experiments}
  
  The numerical exploration to find zetalike values was done following the method of \cite{LT14} using SageMath on 
  laptop,  using 
  the continued fractions  in $\F_q((1/x))$.
Note that in cases (i, iii, iv), $S_d(k)\in \F_q(x)$ by invariance with respect to 
the Galois action $y\rightarrow y+1$ of $K$ over $\F_q(x)$. In the case 
(ii), if $s_i$'s are even, we  get the relevant Galois invariance. In these cases, the method 
of \cite{LT14} using continued fractions for $\F_q((1/x))$ works immediately. 
In case (ii), in general, and in higher class number cases, (for low $q, g$),  we used the norms to descend to this 
$\F_q((1/x))$ situation. 

Apart from higher class number and Dedekind situation, we also looked  for possible rational 
 ratios of multizeta (of depth 2 or 3) of the same weight in class number one case, not explained 
by our conjecture on the zetalike family. 
 We did not find any, in contrast to several examples in genus zero, such as, when $A=\F_2[t]$, 
 we have rational ratios
 $\zeta(1, 3)/\zeta(2, 2)$, $\zeta(2, 3)/\zeta(3 2)$, $\zeta(7, 4)/\zeta(4, 7)$, where only for the first 
 numerators and denominators are zetalike (so the first is explained by zetalikes), but none of the numerators or denominators of the  last 2 are zetalike. 
 (We checked the class number one cases (i), (iv)  for weights up to 32, and (iii) for weights up to 
 12, only for depth 2).

{\bf Acknowledgments.} 
The second author thanks Max Planck Institute in Mathematics, Bonn for its support 
when he conducted this research. Theorem 3.4  was  first announced in October 2019 
at his MPI seminar.

\newpage

\appendix

\section{Case ii, $n = 1$, for $A = \F_3[x,y]$ with $y^2  = x^3-x-1$}

\begin{theorem}
For $A = \F_3[x,y]/ (y^2 = x^3-x-1)$, we have
\begin{align*}
(x^{9} + x^{6} + x^{4} - x^{3} + x^{2}  -1)\zeta(2,6)  = (x^{3} - x + 1)\zeta(8).
\end{align*}
\end{theorem}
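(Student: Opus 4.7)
The plan is to mimic the proofs of Theorems 3.4 and 4.1 above, adapted to the characteristic-$3$ elliptic curve $A=\F_3[x,y]/(y^2=x^3-x-1)$, weight $w=8=q^2-1$ and tuple $(s_1,s_2)=(2,6)$. First I would set up the log-algebraicity machinery of \cite{T92} for this Hayes module: compute the coefficients $d_i,\ell_i$ of $\exp_\rho,\log_\rho$ from the explicit rank-one sign-normalized Drinfeld module on the curve (see \cite{T04}, Ch.\ 2), in particular the ratio $g_d=\ell_d/\ell_{d-1}$ and its $F$-function $g$ in $\F_3(x,y,X,Y)$. Then produce an $F$-function $F_2$ with $\ell_d^2S_d(2)=F_2(d)$ by unwinding the genus-one formulas (18)--(28) of \cite{T92} as in Theorem 3.5, and set $F_6=F_2^3$ (since $S_d(6)=S_d(2)^3$ by the $p$-power rule in characteristic $3$).

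Next I would assemble the depth-$2$ single-degree object: using the identity $S_d(2,6)=S_d(2)S_{<d}(6)$, build $F_{<6}$ via \cite{T09}, Section 3.2 (applicable since $6=2(q-1)$ is $q$-even), and then $F_{26}:=F_2\cdot F_{<6}^{\,?}$ so that $\ell_d^{\,8}S_d(2,6)=F_{26}(d)$; analogously produce $F_{<8}$ satisfying $\ell_d^{\,8}S_{<d}(8)=F_{<8}(d)$ using log-algebraicity for the $q$-even exponent $8$. In parallel I would pass to the cumulative sum: the correct cumulative interpolator $F_{\leq 26}$ must satisfy the Frobenius-difference equation
\begin{equation*}
F_{\leq 26}^{(1)} - (g^{(1)})^{8}\,F_{\leq 26} \;=\; F_{26}^{(1)},
\end{equation*}
parallel to the identity at the end of the proof of Theorem 3.5 and of Theorem 4.1. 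I would solve it by an ansatz $F_{\leq 26}=N/D$ with $D$ a suitable product of the $d$-th twists of the denominator of $F_2$ (twisted by the appropriate powers, guided by the pattern in Remark 4.1(1) and in (V)) and $N=\sum a_kX^k+Y\sum b_mX^m$, using the curve relation $Y^2=X^3-X-1$ to reduce higher powers of $Y$, equating coefficients of $X^n$ and $YX^m$, and checking unique solvability of the resulting linear system via SageMath exactly as described in Remark 4.1(1).

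The final step is the conclusion. Once $F_{\leq 26}$ and $F_{<8}$ are in hand, I would compute the combination
\begin{equation*}
(x^{9}+x^{6}+x^{4}-x^{3}+x^{2}-1)\,F_{\leq 26}\;-\;(x^{3}-x+1)\,F_{<8},
\end{equation*}
and verify that after clearing common denominators the numerator has strictly smaller $(X,Y)$-degree than the denominator, so that upon specializing $X\mapsto x^{3^d},Y\mapsto y^{3^d}$ the corresponding combination $(x^{9}+x^{6}+x^{4}-x^{3}+x^{2}-1)\,\ell_d^{\,8}S_{\leq d}(2,6)-(x^{3}-x+1)\,\ell_d^{\,8}S_{<d}(8)$ tends to $0$ in $K_\infty$. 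Dividing by $\ell_d^{\,8}$ (whose degree growth is controlled) and passing to the limit $d\to\infty$ yields the claimed equality of multizeta values.

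The main obstacles are two: first, the sheer bookkeeping of the numerators $N_{26}$, $N_{\leq 26}$, $N_{<8}$, which in this case are considerably larger than those appearing in Theorems 3.4 and 4.1, since $q=3$ and the curve is not supersingular in the same way as case (i); this magnifies the size of the ansatz and the Frobenius-difference linear system. Second, and more conceptually, the existence of the polynomial/rational $F_{\leq 26}$ solving the difference equation is \emph{a priori} not guaranteed, as emphasized in Remark 4.1(2) and in Section 4(III); success of the method rests on solvability of precisely this instance of the difference equation for $w=8$, which one verifies computationally (and whose existence here is consistent with, and provides evidence for, Conjecture 3.2 for case (ii) at $n=1$).
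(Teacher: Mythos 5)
Your proposal follows essentially the same route as the paper's proof in Appendix A: build the $F$-functions from the genus-one log-algebraicity machinery of \cite{T92} (via $F_1$, $g$ and the generating-function coefficients $A_{d0},A_{d1},A_{d2}$, whence $F_{<2}$, $F_{26}=F_1^2F_{<2}^3$, and $F_{<8}$); solve the Frobenius-difference equation $F_{\le 26}^{(1)}-(g^{(1)})^8F_{\le 26}=F_{26}^{(1)}$ by a polynomial ansatz in $X,Y$ modulo the curve relation and a SageMath linear solve; and conclude by comparing leading terms so that $(x^9+x^6+x^4-x^3+x^2-1)F_{\le 26}-(x^3-x+1)F_{<8}$ has an error of strictly negative degree, hence the combination of power sums, after dividing by $\ell_d^8$, tends to $0$. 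Your version of the recursion (with $F_{26}^{(1)}$ on the right) is in fact the correct one — the printed recursion in the appendix drops the Frobenius twist, though the subsequent SageMath equation uses $N_{26}^{(1)}$ — and your decomposition $F_{26}=F_2\cdot F_{<6}$ with $F_2=F_1^2$, $F_{<6}=F_{<2}^3$ coincides with the paper's $F_1^2F_{<2}^3$; the rest matches.
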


\begin{proof}

We proceed in a similar way to the proof of case i). 
We will now define several functions in $\F_3(x,y,X,Y)$, where $x$ and $X$ are independent transcendentals and $y^2 = x^3-x-1$ and $Y^2 = X^3-X-1$. For each function, say $h$, put $h^{(1)}$ for the function resulting from $h$ after substituting $X^3, Y^3$ respectively for $X,Y$, and put $h(d) \in K$ for the function resulting from $h$  after substituting $x^{3^d}$ and $y^{3^d}$ for $X,Y$. 
Put 
\begin{gather*}
F_1  =  \frac{-(X-x^q) }{X^{2} + \left(x + 1\right) X + y Y + x^{2} - x + 1}, \qquad
 g^3  = \frac{ -(Y-y)^3 + Y^q (X-x)^3} { Y (Y^3-Y) -(X-x^3) - Y^3  Y  (X^3 - X)},\\
 F_{<2}  = -yg^3 + F_1^3 - F_1^2, \qquad 
F_{26}  = F_1^2 F_{<2}^3,\\
F_{<8}  = - \frac{(x^3-x+1)g^9 (g^{(-1)})^9}{x^3-x} + (yg^3 - F_1^3 + F_1^2)^4 + (F_1^3-F_1^2)(y^3 g^9 -F_1^9+F_1^6),\\
A_1  = F_1 (yg^3 - F_1^3 + F_1^2), \quad 
A_2  = F_1 \left( \frac{x^{3} - x + 1}{x^{3}- x} \cdot {g^{9} (g^{(-1)})^{9} } -(F_1^3- F_1^2)  (y^3 g^{9} -F_1^9+F_1^6)  \right).  
\end{gather*}

Notice that $A_1(d) = \ell_d^3 A_{d1}$ and $A_2(d) = \ell_d^{9}A_{d2}$. 
Then, we have $\ell_d S_d(1) = \ell_d A_{d0} = F_1(d)$ and 
\begin{align*}
 \ell_d^2 S_{<d}(2)& = \frac{-\ell_d^3 A_{d1} }{\ell_d A_{d0}} = \frac{ A_1(d)}{F_1(d)} 
  = (-yg^3 + F_1^3 - F_1^2)(d) = F_{<2}(d), \\
 \ell_d^8 S_d(2,6) & = (\ell_d^2 S_d(2) )( \ell_d^2 S_{<d}(2))^3
  = F_1(d)^2 F_{<2}(d)^3
  = F_{26}(d).
\end{align*}
Finally, we have 
\begin{align*}
%
\ell_d^8 S_{<d}(8) & = 
\frac{ (\ell_d^3 A_{d1})^4 - ( \ell_d A_{d0})^3 \cdot \ell_d^9 A_{d2}   }{ (\ell_d A_{d0})^4} = \frac{ A_1(d)^4 - F_1(d)^3 A_2(d)   }{F_1(d)^4}\\
& = - \frac{(x^3-x+1)g^9 (g^{(-1)})^9}{x^3-x} + (yg^3 - F_1^3 + F_1^2)^4 + (F_1^3-F_1^2)(y^3 g^9 -F_1^9+F_1^6), 
\\
& = F_{<8}(d).
\end{align*}

Explicitly $F_{26} = N_{26}/D_{26}$, where $D_{26} = (X^3-x+1)^8$ and $N_{26}$ is 
\begin{align*}
&\left(x^{6} + x^{4} + x^{3} + x^{2} - x + 1\right) X^{17} + 
\left(x^{3} y - x y - y\right) X^{16} Y + 
\left(x^{7} + x^{6} + x^{5} - x^{4} - x^{3}\right) X^{16} +\\&
\left(x^{6} y - x^{2} y + x y - y\right) X^{15} Y + 
\left(x^{8} - x^{7} - x^{4} - x^{3} + x^{2} - x + 1\right) X^{15} + 
\left(-x^{3} y + x y\right) X^{14} Y + \\&
\left(x^{7} + x^{6} + x^{5} - x^{4} - x^{3} + 1\right) X^{14} + 
\left(-x y - y\right) X^{13} Y + 
\left(-x^{9} + x^{8} - x^{7} - x^{4} + x^{3} + x^{2} - x\right) X^{13} + \\& 
\left(x^{7} y + x^{6} y - x^{5} y + x^{4} y - x^{3} y - x^{2} y + x y - y\right) X^{12} Y + \\&
\left(-x^{12} - x^{10} - x^{9} + x^{7} + x^{6} + x^{5} + x^{4} + x^{2} + x + 1\right) X^{12} + \\&
\left(-x^{9} y - x^{6} y + x^{3} y + x^{2} y\right) X^{11} Y + 
\left(-x^{9} + x^{8} - x^{7} - x^{4} + x^{3} + x^{2}\right) X^{11} + \\&
\left(-x^{10} y - x^{9} y - x^{7} y + x^{6} y + x^{5} y - x^{4} y - x^{3} y\right) X^{10} Y + \\&
\left(-x^{12} + x^{10} - x^{9} + x^{7} - x^{6} + x^{5} + x^{4} - x^{2} + 1\right) X^{10} + \\&
\left(-x^{11} y + x^{10} y + x^{9} y + x^{7} y - x^{6} y - x^{5} y + x^{4} y - x^{3} y\right) X^{9} Y + \\& 
\left(-x^{13} - x^{11} - x^{10} + x^{9} + x^{8} + x^{7} + x^{5} + x^{3} - x^{2} + x + 1\right) X^{9} + 
\left(-x^{6} y - x^{4} y - x^{2} y + y\right) X^{8} Y + \\&
\left(-x^{12} + x^{10} + x^{9} - x^{7} + x^{6} - x^{5} + x^{4} - x^{3} + x^{2} + x + 2\right) X^{8} + \\&
\left(-x^{9} y - x^{7} y - x^{6} y - x^{5} y + x^{4} y - x y - y\right) X^{7} Y +\\& \left(-x^{13} - x^{12} + x^{11} - x^{10} - x^{8} - x^{6} - x^{4} + x^{3} + x^{2} + x + 1\right) X^{7} + \\&
\left(-x^{12} y - x^{10} y - x^{9} y - x^{8} y - x^{6} y + x^{5} y - x^{4} y + x^{3} y - x^{2} y + x y - y\right) X^{6} Y + \\&
\left(-x^{14} + x^{13} - x^{12} - x^{10} - x^{8} + x^{3} + x + 2\right) X^{6} + \left(x^{9} y - x^{7} y - x^{6} y - x^{5} y + x^{2} y - x y + y\right) X^{5} Y +\\&
\left(-x^{13} + x^{12} + x^{11} - x^{8} + x^{6} - x^{4} - x^{3} - x^{2} - x\right) X^{5} +\\& \left(-x^{9} y - x^{8} y + x^{7} y - x^{6} y + x^{5} y + x^{4} y - x^{3} y + x y + y\right) X^{4} Y + \\&
\left(-x^{14} + x^{11} - x^{10} - x^{9} - x^{8} - x^{7} + x^{6} - x^{4} - x^{3} + x^{2} - x + 1\right) X^{4} + \\&
\left(-x^{13} y + x^{12} y - x^{10} y - x^{9} y - x^{8} y + x^{7} y + x^{6} y - x^{5} y + x^{4} y - x^{3} y + x^{2} y - x y + y\right) X^{3} Y + \\&
\left(-x^{15} - x^{14} + x^{13} - x^{12} - x^{11} - x^{10} + x^{8} + x^{6} - x^{4} + x^{3} - x^{2} + x\right) X^{3} + \\&
\left(-x^{8} y - x^{7} y + x^{5} y + x^{4} y + x^{3} y + x^{2} y + x y + y\right) X^{2} Y + \\&
\left(-x^{14} - x^{13} + x^{12} - x^{11} + x^{10} + x^{9} - x^{8} + x^{7} - x^{6} - x^{3} + x^{2} + 1\right) X^{2} + \\&
\left(-x^{11} y - x^{10} y - x^{9} y + x^{8} y - x^{7} y - x^{6} y + x^{5} y + x^{4} y + x^{3} y\right) X Y + \\&
\left(-x^{15} + x^{14} + x^{11} - x^{9} + x^{7} - x^{6} + x^{5} + x^{3}\right) X +\\&
\left(-x^{14} y - x^{13} y - x^{12} y + x^{11} y + x^{10} y + x^{9} y - x^{8} y - x^{6} y\right) Y - x^{16} - x^{14} + x^{13} - x^{12}\\& + x^{11} - x^{10} + x^{9} + x^{8} - x^{7}. 
\end{align*}

In order to find $F_{ \le 26}$, such that $F_{\le 26}(d) = \ell_d^8 S_{\le d}(2,6)$, we use the recursion identity $F_{\le 26}^{(1)} - (g^{(1)})^8 F_{\le 26} = F_{26}$. Let $Z = (X^3-x+1)^8 F_{\le 26}$. We solve by using SageMath, the equation 
\begin{align*}
 (X^3-x+1)^8 (Z^{(1)} - N_{26}^{(1)}  ) = (-X^6Y + X^4 Y + (x - 1) X^3 Y + (-x - 1) X Y + (-x - 1) Y - y )^8 Z,
\end{align*}
where $Z = \sum _{k=0}^{18} a_k X^k + Y \sum _{m = 0}^{15} b_m X^m$. The unique solution obtained is $Z = N_{\le 26}/(x^3-x)$, so that 
$F_{\le 26} = N_{ \le 26}/D_{\le 26}$ where $D_{\le 26 } = (x^3-x)(X^3-x+1)^8$ and
$N_{\le 26}$ is
\begin{align*}
&\left(x^{3} - x + 1\right) X^{18} + \left(-x^{6} y - x^{4} y + x^{3} y - x^{2} y - x y\right) X^{15} Y + 
\left(-x^{9} - x^{6} - x^{4} + x^{3} - x^{2} + 1\right) X^{15} + \\&
\left(x^{6} y + x^{4} y - x^{3} y + x^{2} y + x y\right) X^{13} Y + \left(-x^{9} y + x^{7} y - x^{6} y + x^{5} y + x^{4} y + x^{3} y + x y\right) X^{12} Y + \\&
\left(-x^{10} + x^{9} - x^{7} + x^{6} - x^{5} - x^{4} + x^{2} - x\right) X^{12} + \left(x^{9} y - x^{7} y - x^{6} y - x^{5} y + x^{3} y + x^{2} y\right) X^{10} Y + \\&
\left(-x^{7} y - x^{6} y - x^{5} y - x^{3} y + x^{2} y + y\right) X^{9} Y + \left(-x^{11} - x^{10} + x^{9} - x^{8} - x^{7} + x^{6} + x^{4} - x^{3} + x^{2} + 1\right) X^{9} + \\&
\left(x^{9} y - x^{3} y - y\right) X^{7} Y + \left(-x^{10} y + x^{7} y + x^{6} y + x^{5} y + x^{4} y - x^{3} y - x^{2} y + y\right) X^{6} Y + \\&
\left(-x^{12} - x^{10} + x^{7} + x^{5} - x^{3} - x^{2} + x + 1\right) X^{6} + \left(x^{10} y + x^{9} y - x^{7} y - x^{6} y - x^{5} y - x^{4} y + x^{2} y + y\right) X^{4} Y + \\&
\left(-x^{11} y - x^{9} y - x^{8} y - x^{6} y - x^{5} y - x^{4} y + x^{2} y + y\right) X^{3} Y + \\&
\left(-x^{13} - x^{12} - x^{11} - x^{9} + x^{8} - x^{7} + x^{6} - x^{5} - x^{4} + x^{3} - x^{2} - x + 1\right) X^{3} +\\& \left(x^{11} y + x^{10} y - x^{9} y + x^{8} y - x^{7} y\right) X Y + \\&
\left(x^{11} y + x^{10} y - x^{9} y + x^{8} y - x^{7} y\right) Y + x^{15} - x^{14} - x^{13} + x^{12} + x^{11} + x^{10} + x^{9} + x^{8} + x^{7}.
\end{align*}

$F_{<8} = N_{<8}/D_{<8}$ where $D_{< 8} = (x^3-x)(X^3-x+1)^8$ and $N_{<8}$ is 
\begin{align*}
&\left(x^{9} + x^{6} + x^{4} - x^{3} + x^{2} + 2\right) X^{18} + 
\left(-x^{3} y + x y\right) X^{15} Y - X^{15} + 
\left(x^{3} y - x y\right) X^{13} Y + \\&
\left(-x^{12} y + x^{10} y - x^{4} y + x^{3} y + x^{2} y - x y\right) X^{12} Y +\\& \left(x^{9} + x^{6} + x^{4} - x^{3} + x^{2} - x\right) X^{12} + \left(x^{12} y - x^{10} y + x^{4} y - x^{2} y\right) X^{10} Y + \\&
\left(x^{12} y - x^{10} y + x^{4} y + x^{3} y - x^{2} y - x y - y\right) X^{9} Y + \left(-x^{9} - x^{6} - x^{4} + x^{3} + x^{2} - x + 2\right) X^{9} + \\&
\left(-x^{3} y + x y + y\right) X^{7} Y + 
\left(x^{12} y - x^{10} y + x^{4} y - x^{3} y - x^{2} y - y\right) X^{6} Y + \left(x^{9} + x^{6} + x^{4} + x^{3} + x^{2} - x + 2\right) X^{6} + \\&
\left(-x^{12} y + x^{10} y - x^{4} y + x^{2} y + x y - y\right) X^{4} Y + 
\left(-x^{12} y + x^{10} y - x^{4} y - y\right) X^{3} Y + \\&
\left(-x^{4} + x^{3} - x^{2} + x + 2\right) X^{3} + \left(x^{2} y + x y\right) X Y + \left(x^{2} y + x y\right) Y + x^{18} + x^{15} + x^{13} - x^{12} + x^{11} + x^{6} - x^{5} + x.
\end{align*}

Notice that denominators of $F_{\le 26}$ and $F_{ <8}$ match. The degree of $E = (x^{9} + x^{6} + x^{4} - x^{3} + x^{2} -1)F_{\le 26} - (x^{3} - x + 1)F_{<8}$ is negative; more precisely, the degree of $E(d)$ is  $-(-27+15 \times 3^d)$; since the degree of $\ell_d$ is $-(3^{d+1}-3)/2$, the degree of the $E(d)/\ell_d^8$ is $-(-15+3^{d+1})$; therefore, $E(d)/\ell_d^8$ tends to zero as $d$ tends to infinity. 

\end{proof}

\section{Case iii, $n=1$, $A = \F_4[x,y]$ with $y^2 +y = x^3 +w$}

\begin{theorem}
\begin{align*}
\zeta(3, 12) / \zeta(15)  = (x^{12} + x^9 + x^6 + x^3 + 1)/(x^{24} + x^{18} + x^9 + x^3 + 1) 
\end{align*}
\end{theorem}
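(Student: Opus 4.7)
My plan is to follow the template of Theorems 3.3 and 4.1 and of the case (ii) proof in the preceding section, adapted to the elliptic curve $y^2+y = x^3+w$ over $\F_4$. Here $q=4$, the characteristic is $2$, and $(s_1,s_2) = (3,12) = (q-1,(q-1)q)$ is the $n=1$, $k=0$ instance of Conjecture 3.1 (note the weight $15 = 5(q-1)$ is $q$-even). The target identity is equivalent to
$$(x^{24}+x^{18}+x^9+x^3+1)\,\zeta(3,12) = (x^{12}+x^9+x^6+x^3+1)\,\zeta(15),$$
and the strategy is to lift this to an F-function identity on $C\times C$ holding up to an error of strictly negative $X$-degree, then let $d\to\infty$.

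First I would set $B_x = X+x$, $B_y = Y+y$ and compute $g\in\F_4(x,y,X,Y)$ representing $\ell_d/\ell_{d-1}$, together with $F_1$ satisfying $\ell_d S_d(1) = F_1(d)$, using the sign-normalized rank-one Drinfeld module for this $A$ recorded in \cite{T04}. Running the recursion of \cite{T92} in the $\F_4$ setting and comparing coefficients of $t, t^q, t^{q^2},\ldots$ in the generating-function identity (18) of \cite{T92}, I would extract $A_{d0}, A_{d1},\ldots, A_{d4}$ and from them manufacture $F_3$ (with $\ell_d^3 S_d(3) = F_3(d)$), $F_{<3}$ (with $\ell_d^3 S_{<d}(3) = F_{<3}(d)$) and $F_{<15}$ (with $\ell_d^{15} S_{<d}(15) = F_{<15}(d)$). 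This parallels formulas (iv) and (v) of Theorem 3.4 at the $q=2$, weight-$3$ level, but now at $q=4$ and weight $15$.

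The depth-$2$, $S_d$-level function comes for free: because $12 = 3q$ and the characteristic is $2$, Frobenius gives $S_{<d}(12) = S_{<d}(3)^4$, so
$$F_{3,12} := F_3\cdot F_{<3}^{\,4}, \qquad \ell_d^{15} S_d(3,12) = F_{3,12}(d).$$
To pass to $S_{\leq d}$ I would produce $F_{\leq 3,12}$ by solving the Frobenius-difference equation
$$F_{\leq 3,12}^{(1)} - (g^{(1)})^{15}\,F_{\leq 3,12} = F_{3,12}^{(1)},$$
using an ansatz $Z = \sum_k a_k X^k + Y\sum_m b_m X^m$ of bounded $X$-degree and reducing higher $Y$-powers via $Y^2 = Y + X^3 + w$, exactly as in Remark 4.3(1) and in the SageMath computation of the case (ii) appendix. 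This collapses to a finite linear system over $\F_4(x,y)$; a unique solution will simultaneously define $F_{\leq 3,12}$ and verify the recursion.

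The endgame is the same leading-term comparison as in Theorems 3.3 and 4.1: I would check that
$$(x^{24}+x^{18}+x^9+x^3+1)\,F_{\leq 3,12} - (x^{12}+x^9+x^6+x^3+1)\,F_{<15}$$
has strictly negative $X$-degree, so that after specializing at $(X,Y) = (x^{q^d}, y^{q^d})$ and dividing by $\ell_d^{15}$ the induced $K_\infty$-error decays to $0$ as $d\to\infty$, yielding the stated relation. The main obstacle, as flagged in Remark (III) and Remark 4.3(2), is guaranteeing the solvability of the Frobenius-difference equation at weight $15$ inside the chosen ansatz space; once solvability is established by the linear-algebra computation, the remainder is bookkeeping of sizable numerators in $\F_4[w][x,y,X,Y]$ modulo the two curve equations, a verification of comparable size to the weight-$7$ case of Theorem 4.1 and well within the reach of SageMath.
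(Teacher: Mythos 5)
Your proposal reproduces exactly the strategy the paper's Appendix B uses: the same $F$-functions ($F_1$, $g$, $F_{<3}$, $F_{3,12}=F_3F_{<3}^4 = F_1^3F_{<3}^4$, $F_{<15}$, $F_{\le 3,12}$), the same Frobenius-difference equation $F_{\le 3,12}^{(1)}-(g^{(1)})^{15}F_{\le 3,12}=F_{3,12}^{(1)}$ solved by a SageMath linear-algebra ansatz on $C\times C$, and the same final leading-term/degree comparison sending the error to $0$ as $d\to\infty$. This is essentially identical to the paper's own proof, including the observation that $S_{<d}(12)=S_{<d}(3)^4$ (using $12=3q$ with $q=4$) reduces the depth-two construction to depth-one data.
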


\begin{proof}
We will now define several functions in $\F_3(x,y,X,Y)$, where $x$ and $X$ are independent transcendentals and $y^2 +y= x^3+w$ and $Y^2 = X^3+w$. For each function, say $h$, put $h^{(1)}$ for the function resulting from $h$ after substituting $X^4, Y^4$ respectively for $X,Y$, and put $h(d) \in K$ for the function resulting from $h$  after substituting $x^{4^d}$ and $y^{4^d}$ for $X,Y$. 
Put 
\begin{gather*}
 F_1  =\frac{X+x^q}{x^{2} X^{2} + X Y + \left(y + 1\right) X + x Y + x y} ,\qquad
 g  = \frac{Y+y  +X^2(X-x)}{X^4+x} \\
F_{<3}  = (x^4+x)g^4 -F_1^4 + F_1^3, \qquad 
F_{3, 12}  = F_1^3 F_{<3}^4,  \qquad
A_1 = F_1 ((x^4+x)g^4 +F_1^4 + F_1^3 ),\\
A_2 = F_1 ( \frac{(x^{12} + x^9 + x^6 + x^3 + 1) g^{16}(g^{(-1)})^{16} }{x^4 + x}  + (F_1^4 + F_1^3)((x^4+x)g^4+ F_1^4 + F_1^3 )^4 ),
\end{gather*}
and
\begin{multline*}
F_{<15}  = ((x^4+x)g^4 + F_1^4 + F_1^3)^{5} +\\ \frac{(x^{12} + x^9 + x^6 + x^3 + 1){g^{16} (g^{(-1)})^{16}  }}{x^4+x}  (F_1^4+F_1^3)( (x^4+x)g^4 + F_1^4 + F_1^3)^4.
\end{multline*}
Note that $A_1(d) = \ell_d ^4 A_{d1}$ and $A_2(d) = \ell_d^{16} A_{d2}$. 

We have 
\begin{align*}
\ell_d S_d(1) & = F_1(d)\\
 \ell_d^3 S_{<d}(3) &= \frac{\ell_d^4 A_{d1}  }{\ell_d A_{d0}}  
= \frac{F_1(d) ( (x^4+x)g^4 - F_1^4 + F_1^3  )(d)}{F_1(d)}
= F_{<3}(d),\\
\ell_d^{15} S_d(3,12) & = ( \ell_dS_d(3))^3 (\ell_d^3 S_{<d}(3))^4 = F_1(d)^3 F_{<3}(d)^4 = F_{3,12}(d),\\
 \ell_d^{15} S_{<d}{15} &=  \frac{(\ell_d^4 A_{d1} )^5 + ( \ell_d A_{d0} )^4 ( \ell_d^{16} A_{d2} )   }{ (\ell_d A_{d0})^5  } = F_{<15}(d).   
\end{align*}

Explicitly, we have  $F_{<15} = N_{<15}/ D_{<15}$, where $D_{<15} = (x^4+x) (X^4+x)^{15}$ and 

\begin{align*}
N_{<15} & = \left(x^{24} + x^{18} + x^{9} + x^{3} + 1\right) X^{40} + \left(x^{8} + x^{2}\right) X^{38} + \left(x^{16} + x^{10} + x^{4} + x\right) X^{36} + \left(x^{8} + x^{2}\right) X^{35}\\&
- X^{34} + \left(x^{8} + x^{2}\right) X^{32} Y + \left(x^{32} + x^{26} + x^{20} + x^{17} + x^{8} y + x^{2} y\right) X^{32} - X^{31} + x X^{30} - X^{28} Y + \\&
\left(y + 1\right) X^{28} + x X^{27} + x^{2} X^{26} + x X^{24} Y + \left(x y + x\right) X^{24} + x^{2} X^{23} + x^{3} X^{22} + x^{2} X^{20} Y + \\&
\left(x^{2} y + x^{2}\right) X^{20} + x^{3} X^{19} + x^{4} X^{18} + x^{3} X^{16} Y + \left(x^{24} + x^{18} + x^{9} + x^{3} y + 1\right) X^{16} + x^{4} X^{15} + \\&
\left(x^{8} + x^{5} + x^{2}\right) X^{14} + x^{4} X^{12} Y + \left(x^{16} + x^{10} + x^{4} y + x\right) X^{12} + \left(x^{8} + x^{5} + x^{2}\right) X^{11} + \left(x^{6} + 1\right) X^{10} + \\&
\left(x^{8} + x^{5} + x^{2}\right) X^{8} Y + \left(x^{32} + x^{26} + x^{20} + x^{17} + x^{8} y + x^{5} y + x^{5} + x^{2} y\right) X^{8} + \left(x^{6} + 1\right) X^{7} + \\&
\left(x^{7} + x\right) X^{6} + \left(x^{6} + 1\right) X^{4} Y + \left(x^{24} + x^{18} + x^{9} + x^{6} y + x^{6} + x^{3} + y\right) X^{4} + \left(x^{7} + x\right) X^{3} + \\&
\left(x^{7} + x\right) Y + x^{40} + x^{34} + x^{25} + x^{19} + x^{10} + x^{7} y + x^{7} + x^{4} + x y.
\end{align*}

On the other hand, 
$F_{3,12} = N_{3,12}/D_{3,12}$ where $D_{3,12} = (X^4+x)^{15}$ and $N_{3,12}$ is  (too large to fit in here)
\begin{align*}
N_{3,12} & = (x^{18} + x^6)X^{39} + (x^{16} + x^4)X^{38}Y + (x^{22} + x^{16}y + x^{16} + x^{10} + x^4y + x^4)X^{38} + \dotsb
\end{align*}

Next, We calculate $F_{\le 3,12}$ such that $F_{\le 3,12}(d) = \ell_d^{15}S_{\le d}(3,15)$. By using the  identity recursion $F_{\le 3,12} - (g^{(1)})^{15}F_{\le 3,12} = F_{3,12}$,   
we get the Frobenius difference equation 
\begin{align*}
(X^4+x)^{15} ( Z^{(1)}-  N_{3,12}^{(1)}) & = (X^{12} + xX^8 + X^6 + X^3 + Y + y + 1)^{15}Z,
\end{align*}
where $Z = (X^4+x)^{15} F_{\le 3,12}$. We put $Z = \sum _{k=0}^{40} a_k X^k + Y \sum _{m = 0}^{36} b_m X^m$. The unique solution obtained is $Z = N_{\le 3,12}/(x^4+x)$ so that 
$F_{\le 3,12} = Z/(X^4+x)^{15} = N_{\le 3,12}/D_{\le 3,12}$ where $D_{\le 3,12} = (x^4+x)(X^4+x)^{15}$ and $N_{\le 3,12}$ is 
\begin{align*}
&\left(x^{12} + x^{9} + x^{6} + x^{3} + 1\right) X^{40} + \left(x^{20} + x^{17} + x^{8} + x^{5}\right) X^{38} + \left(x^{22} + x^{19} + x^{10} + x^{7}\right) X^{36} + \\&
\left(x^{20} + x^{17} + x^{8} + x^{5}\right) X^{35} + \left(x^{24} + x^{18} + x^{9} + x^{3} + 1\right) X^{34} + \left(x^{20} + x^{17} + x^{8} + x^{5}\right) X^{32} Y + \\&
\left(x^{26} + x^{23} + x^{20} y + x^{17} y + x^{14} + x^{11} + x^{8} y + x^{5} y\right) X^{32} + \left(x^{24} + x^{18} + x^{9} + x^{3} + 1\right) X^{31} + \\&
\left(x^{25} + x^{19} + x^{10}\right) X^{30} + \left(x^{24} + x^{18} + x^{9} + x^{3} + 1\right) X^{28} Y + \\&
\left(x^{24} y + x^{24} + x^{18} y + x^{18} + x^{9} y + x^{9} + x^{6} + x^{3} y + y + 1\right) X^{28} + \left(x^{25} + x^{19} + x^{10}\right) X^{27} + \\&
\left(x^{26} + x^{20} + x^{11} + x^{8} + x^{5}\right) X^{26} + \left(x^{25} + x^{19} + x^{10}\right) X^{24} Y + \left(x^{25} y + x^{25} + x^{19} y + x^{19} + x^{10} y + x^{7}\right) X^{24} + \\&
\left(x^{26} + x^{20} + x^{11} + x^{8} + x^{5}\right) X^{23} + \left(x^{27} + x^{21} + x^{6}\right) X^{22} + \left(x^{26} + x^{20} + x^{11} + x^{8} + x^{5}\right) X^{20} Y + \\&
\left(x^{26} y + x^{26} + x^{20} y + x^{20} + x^{14} + x^{11} y + x^{8} y + x^{8} + x^{5} y + x^{5}\right) X^{20} + \left(x^{27} + x^{21} + x^{6}\right) X^{19} + \\&
\left(x^{28} + x^{22} + x^{16} + x^{13} + x^{7}\right) X^{18} + \left(x^{27} + x^{21} + x^{6}\right) X^{16} Y + \\&
\left(x^{27} y + x^{27} + x^{21} y + x^{21} + x^{18} + x^{15} + x^{12} + x^{9} + x^{6} y + x^{3} + 1\right) X^{16} + \left(x^{28} + x^{22} + x^{16} + x^{13} + x^{7}\right) X^{15} + \\&
\left(x^{29} + x^{23} + x^{17} + x^{14} + x^{5}\right) X^{14} + \left(x^{28} + x^{22} + x^{16} + x^{13} + x^{7}\right) X^{12} Y + \\&
\left(x^{28} y + x^{28} + x^{22} y + x^{22} + x^{16} y + x^{16} + x^{13} y + x^{13} + x^{10} + x^{7} y\right) X^{12} + \left(x^{29} + x^{23} + x^{17} + x^{14} + x^{5}\right) X^{11} + \\&
\left(x^{30} + x^{24} + x^{18} + x^{15} + x^{3} + 1\right) X^{10} + \left(x^{29} + x^{23} + x^{17} + x^{14} + x^{5}\right) X^{8} Y + \\&
\left(x^{29} y + x^{29} + x^{23} y + x^{23} + x^{20} + x^{17} y + x^{14} y + x^{11} + x^{8} + x^{5} y\right) X^{8} + \left(x^{30} + x^{24} + x^{18} + x^{15} + x^{3} + 1\right) X^{7} + \\&
\left(x^{31} + x^{28} + x^{19} + x^{16}\right) X^{6} + \left(x^{30} + x^{24} + x^{18} + x^{15} + x^{3} + 1\right) X^{4} Y + \\&
\left(x^{30} y + x^{27} + x^{24} y + x^{24} + x^{18} y + x^{18} + x^{15} y + x^{15} + x^{12} + x^{9} + x^{3} y + x^{3} + y\right) X^{4} + \\&
\left(x^{31} + x^{28} + x^{19} + x^{16}\right) X^{3} + \left(x^{31} + x^{28} + x^{19} + x^{16}\right) Y + x^{34} + x^{31} y + x^{28} y + x^{25} + x^{19} y + x^{19} + x^{16} y.
\end{align*}

The leading term of the numerator of  $E = (x^{24} + x^{18} + x^9 + x^3 + 1) F_{\le 3, 12} -(x^{12} + x^9 + x^6 + x^3 + 1) F_{<15}$ is 
$$
(x^{44} + x^{41} + x^{38} + x^{35} + x^{32} + x^{20} + x^{17} + x^{14} + x^{11} + x^2)X^{38}
$$
so that the degree of $E(d)$ is $-(-80+44q^d)$; since the degree of $\ell_d$ is $-8(4^d-1)/3$,  the degree of $E(d)/\ell_d^{15}$ is $-(-40+4^{d+1})$  and thus  as $d$ tends to infinity, the error tends to zero.
\end{proof}


\section{Case iv, $n = 1$, $A = \F_2[x,y]$ with $y^2 + y = x^5+x^3+1$}

\begin{theorem}
For $A = \F_2[x,y]$ with $y^2 + y = x^5+x^3+1$, we have
\begin{align*}
(x^8 + x^6 + x^5 + x^4 + x^3 + x + 1)\zeta(1,2) = (x^6 + x^5 + x^3 + x + 1) \zeta(3). 
\end{align*}
\end{theorem}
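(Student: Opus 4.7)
The plan is to follow the template used in the proofs of Theorem~3.4, Theorem~A.1 and Theorem~B.1, now specialised to the genus-$2$ curve $y^{2}+y=x^{5}+x^{3}+1$ over $\F_2$. Since we are again in the case $q=2$, the characteristic-$2$ identity $S_{<d}(2)=S_{<d}(1)^{2}$ (and hence $S_d(1,2)=S_d(1)S_{<d}(1)^{2}$) holds, so the F-functions to assemble are $F_{1}$, $F_{<1}$, $F_{12}:=F_{1}F_{<1}^{2}$, $F_{3}$, $F_{<3}$ and (critically) $F_{\leq 12}$, mirroring the six items of Theorem~3.5.

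First I would feed the sign-normalised rank-one Drinfeld--Hayes module $\rho$ for this specific $A$ (recorded in \cite{T04}, Chapter~2) into the recursion machinery of \cite{T92}. Namely, the coefficients of $\rho_{t}$ give the initial data $x_{1},y_{1},y_{2},\ldots$; the functional equations linking $\exp_{\rho}$ and $\log_{\rho}$ then recursively determine $d_{i}$ and $\ell_{i}$; and matching the $t$, $t^{2}$, $t^{4}$ coefficients in the generating function identity \cite{T92}(21) yields $A_{d0},A_{d1},A_{d2}$. From this data one constructs explicit rational functions $F_{1},\,g \in \F_2(x,y,X,Y)/(Y^{2}+Y-X^{5}-X^{3}-1)$ with $F_{1}(d)=\ell_{d}S_{d}(1)$ and $g(d)=\ell_{d}/\ell_{d-1}$, and then (exactly as in Theorem~3.5(ii)--(v), but now on a genus-$2$ fibre product) rational functions $F_{<1},F_{3},F_{<3}$ interpolating $\ell_{d}S_{<d}(1)$, $\ell_{d}^{3}S_{d}(3)$, $\ell_{d}^{3}S_{<d}(3)$, and $F_{12}:=F_{1}F_{<1}^{2}$ interpolating $\ell_{d}^{3}S_{d}(1,2)$.

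The central analytic step is to construct $F_{\leq 12}$ with $\ell_{d}^{3}S_{\leq d}(1,2)=F_{\leq 12}(d)$ by solving the Frobenius--difference equation
\[
F_{\leq 12}^{(1)}\;-\;(g^{(1)})^{3}\,F_{\leq 12}\;=\;F_{12}^{(1)},
\]
which encodes $S_{\leq d}(1,2)=S_{d}(1,2)+S_{\leq d-1}(1,2)$. Following Remark~(1) of Remarks~5.2, I would predict a denominator for $F_{\leq 12}$ (a suitable power of a Frobenius twist of the denominator of $F_{1}$, times a small $\F_2[x,y]$-factor), clear it, and write the unknown numerator as $Z=\sum_{k}a_{k}X^{k}+Y\sum_{m}b_{m}X^{m}$ with $a_{k},b_{m}\in\F_2(x,y)$, using the curve relation $Y^{2}=Y+X^{5}+X^{3}+1$ to reduce higher powers of $Y$. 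Equating coefficients of $X^{n}$ and $YX^{m}$ on the two sides then gives a linear system over $\F_2(x,y)$, which I would solve with SageMath; uniqueness of the solution would simultaneously verify that the resulting $F_{\leq 12}$ satisfies the recursion.

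To conclude, I would compute the leading (highest $X,Y$-degree) part of
\[
(x^{8}+x^{6}+x^{5}+x^{4}+x^{3}+x+1)\,F_{\leq 12}\;-\;(x^{6}+x^{5}+x^{3}+x+1)\,F_{<3}
\]
over a common denominator, and verify that its total degree in $X,Y$ is negative enough that, after specialising at $(X,Y)=(x^{2^{d}},y^{2^{d}})$ and dividing by $\ell_{d}^{3}$, the corresponding $\F_2$-linear combination of $S_{\leq d}(1,2)$ and $S_{<d}(3)$ tends to $0$ as $d\to\infty$; passing to the limit then gives the claimed identity between $\zeta(1,2)$ and $\zeta(3)$. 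The main obstacle I anticipate is the genus-$2$ growth of the F-functions: the ansatz $Z$ will have substantially more unknowns than in the genus-$1$ cases, and as flagged in Remark~(2) of Remarks~5.2 there is no a-priori guarantee that the Frobenius--difference equation is even solvable; one must verify by explicit linear algebra on the genus-$2$ self-product that the system has a unique solution of the predicted shape. Once $F_{\leq 12}$ is produced, the remaining degree comparison is routine, if bulky.
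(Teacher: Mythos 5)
Your proposal matches the paper's Appendix C proof of this theorem almost exactly: build F-functions $F_1, F_{<1}, F_{12}=F_1F_{<1}^2, F_3, F_{<3}$ for the genus-$2$ curve, obtain $F_{\leq 12}$ by solving the Frobenius-difference equation $F_{\leq 12}^{(1)}-(g^{(1)})^3F_{\leq 12}=F_{12}^{(1)}$ with a SageMath linear-algebra ansatz, and finish by a leading-term/degree comparison of $(x^8+x^6+x^5+x^4+x^3+x+1)F_{\leq 12}-(x^6+x^5+x^3+x+1)F_{<3}$. The only cosmetic deviations from the paper are that it obtains $F_1$ directly from the log-algebraicity theorem (Conjecture E of \cite{T92}, proved in \cite{A94}) and computes $F_{<1}$ and $F_{<3}$ by solving their own Frobenius-difference equations (rather than reading them off $A_{d1},A_{d2}$ as in the genus-$1$ case), and the specialization map in genus $2$ carries a degree shift ($X\mapsto x^{2^{d-2}}$), neither of which alters the structure of the argument.
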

\begin{proof}

We will now define several functions in $\F_2(x,y,X,Y)$, where $x$ and $X$ are independent transcendentals and $y^2 +y= x^5+x^3+1$ and $Y^2 +Y = X^5+X^3+1$. For each function, say $h$, put $h^{(1)}$ for the function resulting from $h$ after substituting $X^2, Y^2$ respectively for $X,Y$, and put $h(d) \in K$ for the function resulting from $h$  after substituting $x^{2^{d-2}}$ and $y^{2^{d-2}}$ for $X,Y$. 
Put $F_1 = N_1 / D_1$, where $D_1 = (X^8+x)(X^{16}+x+1)$ and 
\begin{align*}
N_1 & = X^{15} + x^{2} X^{14} + x X^{13} + \left(x^{3} + x\right) X^{12} + x X^{11} + X^{10} Y + \left(x^{2} + x + y\right) X^{10} + \left(x + 1\right) X^{9}\\& + \left(x + 1\right) X^{8} Y + \left(x^{2} + x y + y + 1\right) X^{8} + x X^{7} + X^{6} Y + \left(x^{2} + y\right) X^{6} + x X^{5} + x X^{4} Y \\& + \left(x^{3} + x y + x\right) X^{4} + x X^{3} + \left(x^{3} + x^{2}\right) X^{2} + x Y + x^{4} + x y + x.
\end{align*}

By Conjecture E in \cite[pa. 194]{T92} (which is now a theorem, see \cite{A94}), we get that 
$\ell_d S_d(1) = F_1(d)$. 
Next we will calculate $F_{<1}$ in such a way that $F_{<1}(d) = \ell_d S_{<d}(1)$. 
From the identity $S_{<d+1}(1) = S_{<d}(1) + S_d(1)$ we get
\begin{align*}
\ell_{d+1} S_{<d+1}(1) &= \frac{\ell_{d+1}}{\ell_d} (\ell_d S_{<d}(1) + \ell_d S_d(1))\\
F_{<1}(d+1)& = g(d+1)(F_{<1}(d) + F_1(d) ).
\end{align*}
The Frobenius equation to be solved is $F_{<1}^{(1)} = g^{(1)} (F_{<1} + F_1)$. Let $Z = (X^8+x)(X^{16}+x+1) F_{<1}$. The equation to be solved
now is 
\begin{align*}
 (X^8+x) (X^{16}+x+1)Z^{(1)}  = (N_g^{(1)})(Z+N_1).
\end{align*}
We put $Z = \sum_{i=0}^{16} a_iX^i + Y \sum _{j=0}^{10} b_jX^j$, and it is obtained a system of $107$ equations with $28$ unknowns.  
The unique solution is $Z = N_{<1}$, so that $F_{<1} = N_{<1}/((X^8+x)(X^{16}+x+1) )$, where
\begin{align*}
N_{<1} &= x^2 + x)X^{16} + X^{15} + (x^2 + 1)X^{14} + xX^{13} + (x^3 + x^2 + x + 1) X^{12} + xX^{11} + X^{10} Y\\& 
+ (x^2 + y) X^{10} + x X^9 + (x + 1) X^8 Y + (x^3 + x^2 + x y + y) X^8 + (x + 1) X^7 + X^6 Y\\& + (x^2 + x + y + 1) X^6 
+ X^5 + (x + 1) X^4 Y + (x^3 + x y + x + y) X^4 + X^3 + (x^3 + x^2) X^2 + Y \\&+ x^4 + x^2 + y.
\end{align*}

Next we define $F_{12} = F_1 F_{<1}^2$. Then 
\begin{align*}
 \ell_d^3 S_d(1,2) = (\ell_d S_d(1)) (\ell_d S_{<d}(1))^2 
 = F_1(d) F_{<1}(d)^2 = F_{12}(d).
\end{align*}
Explicitly, $F_{12} = N_{12}/D_{12}$, where $D_{12} = (X^8 + x)^3  (X^{16} + x + 1)^3$ and $N_{12}$ is
\begin{align*}
&\left(x^{4} + x^{2}\right) X^{47} + \left(x^{6} + x^{4}\right) X^{46} + \left(x^{5} + x^{3} + 1\right) X^{45} + \left(x^{7} + x^{3} + x^{2}\right) X^{44} + \left(x^{5} + x^{4} + x^{3} + x + 1\right) X^{43} \\& + \left(x^{4} + x^{2}\right) X^{42} Y + \left(x^{5} + x^{4} y + x^{4} + x^{2} y + x^{2} + x\right) X^{42} + \left(x^{4} + x^{3}\right) X^{41} + \left(x^{5} + x^{4} + x^{3} + x^{2} + 1\right) X^{40} Y \\& + \left(x^{7} + x^{6} + x^{5} y + x^{5} + x^{4} y + x^{4} + x^{3} y + x^{3} + x^{2} y + y + 1\right) X^{40} + \left(x^{6} + x^{4}\right) X^{39} + \left(x^{2} + x\right) X^{38} Y  \\& + \left(x^{8} + x^{6} + x^{3} + x^{2} y + x^{2} + x y\right) X^{38} + \left(x^{7} + x^{5} + x^{4} + 1\right) X^{37} + \left(x^{4} + x^{3} + x^{2} + x\right) X^{36} Y  \\& + \left(x^{9} + x^{7} + x^{6} + x^{4} y + x^{4} + x^{3} y + x^{2} y + x^{2} + x y + x\right) X^{36} + \left(x^{7} + x^{2} + x + 1\right) X^{35} \\&  + \left(x^{6} + x^{3} + x^{2} + x\right) X^{34} Y   + \left(x^{8} + x^{7} + x^{6} y + x^{4} + x^{3} y + x^{2} y + x y + x\right) X^{34} + \left(x^{7} + x^{4} + x^{3} + x^{2}\right) X^{33}  \\& + \left(x^{7} + x^{6} + x^{5} + x^{4} + x^{3} + x^{2} + x + 1\right) X^{32} Y  \\& + \left(x^{8} + x^{7} y + x^{6} y + x^{6} + x^{5} y + x^{5} + x^{4} y + x^{3} y + x^{2} y + x^{2} + x y + x + y + 1\right) X^{32} + \left(x^{4} + x\right) X^{31}  \\& + \left(x^{6} + x^{5} + x^{4} + x^{3}\right) X^{30} Y + \left(x^{9} + x^{6} y + x^{6} + x^{5} y + x^{4} y + x^{3} y\right) X^{30} + \left(x^{8} + x^{6} + x^{4} + x^{3} + 1\right) X^{29}  \\& + \left(x^{7} + x^{6} + x^{5} + x^{2}\right) X^{28} Y   + \left(x^{10} + x^{7} y + x^{7} + x^{6} y + x^{6} + x^{5} y + x^{3} + x^{2} y + x^{2}\right) X^{28}  \\& + \left(x^{8} + x^{6} + x^{3} + x + 1\right) X^{27}   + \left(x^{7} + x^{6} + x^{5} + x^{4} + x^{2} + x\right) X^{26} Y  \\& + \left(x^{8} + x^{7} y + x^{7} + x^{6} y + x^{6} + x^{5} y + x^{4} y + x^{2} y + x y + x\right) X^{26}   + \left(x^{8} + x^{6} + x^{5} + x^{4} + x^{3} + x + 1\right) X^{25} \\&  + \left(x^{8} + x^{7} + x^{5} + x^{3} + x^{2} + x + 1\right) X^{24} Y \\&  + \left(x^{10} + x^{9} + x^{8} y + x^{8} + x^{7} y + x^{7} + x^{5} y + x^{4} + x^{3} y + x^{2} y + x y + x + y + 1\right) X^{24}  \\& + \left(x^{8} + x^{6} + x^{3} + x^{2} + 1\right) X^{23} + \left(x^{7} + x^{6} + x^{5} + x^{2}\right) X^{22} Y + \left(x^{8} + x^{7} y + x^{6} y + x^{5} y + x^{5} + x^{3} + x^{2} y + x\right) X^{22}  \\& + \left(x^{3} + x\right) X^{21} + \left(x^{8} + x^{7} + x^{4} + x^{3} + x^{2} + x + 1\right) X^{20} Y  \\& + \left(x^{9} + x^{8} y + x^{7} y + x^{7} + x^{5} + x^{4} y + x^{3} y + x^{3} + x^{2} y + x y + x + y + 1\right) X^{20} + \left(x^{6} + x^{4} + x^{2} + x\right) X^{19}  \\& + \left(x^{7} + x^{6} + x^{4} + x\right) X^{18} Y + \left(x^{10} + x^{9} + x^{7} y + x^{6} y + x^{6} + x^{5} + x^{4} y + x^{4} + x^{3} + x y\right) X^{18} \\&  + \left(x^{8} + x^{7} + x^{5} + x^{3} + x^{2} + x + 1\right) X^{17} + \left(x^{7} + x^{6} + x^{4} + x^{3}\right) X^{16} Y  \\& + \left(x^{11} + x^{10} + x^{7} y + x^{7} + x^{6} y + x^{4} y + x^{4} + x^{3} y + x^{3}\right) X^{16} + \left(x^{6} + x^{5} + x^{4} + x\right) X^{15} \\&+ \left(x^{7} + x^{3} + x^{2} + x\right) X^{14} Y \\&  + \left(x^{10} + x^{9} + x^{8} + x^{7} y + x^{7} + x^{6} + x^{5} + x^{4} + x^{3} y + x^{3} + x^{2} y + x y + x\right) X^{14}  \\& + \left(x^{9} + x^{8} + x^{6} + x^{5} + x^{4} + x^{2} + 1\right) X^{13} + \left(x^{8} + x^{5} + 1\right) X^{12} Y  \\& + \left(x^{11} + x^{10} + x^{9} + x^{8} y + x^{8} + x^{7} + x^{5} y + x^{5} + x^{4} + x^{3} + x + y + 1\right) X^{12} + \left(x^{9} + x^{8} + x^{4} + x^{3} + 1\right) X^{11} \\&  + \left(x^{8} + x^{6} + x^{4} + x + 1\right) X^{10} Y + \left(x^{9} + x^{8} y + x^{7} + x^{6} y + x^{5} + x^{4} y + x^{4} + x y + y\right) X^{10}  \\& + \left(x^{9} + x^{8} + x^{7} + x^{6} + x^{5} + x^{2} + x + 1\right) X^{9} + \left(x^{9} + x^{5} + x^{2} + x\right) X^{8} Y  \\& + \left(x^{11} + x^{9} y + x^{9} + x^{8} + x^{7} + x^{6} + x^{5} y + x^{2} y + x^{2} + x y\right) X^{8} + \left(x^{9} + x^{7} + x^{6} + x^{4} + x^{2}\right) X^{7}  \\& + \left(x^{8} + x^{5} + x^{4} + x^{3} + x^{2} + x + 1\right) X^{6} Y\\& + \left(x^{10} + x^{9} + x^{8} y + x^{8} + x^{5} y + x^{4} y + x^{4} + x^{3} y + x^{3} + x^{2} y + x y + y\right) X^{6} \\&  + \left(x^{9} + x^{6} + x^{5} + x^{3} + x^{2}\right) X^{5} + \left(x^{9} + x^{7} + x^{6} + x^{4} + x^{3}\right) X^{4} Y \\&  + \left(x^{11} + x^{10} + x^{9} y + x^{9} + x^{7} y + x^{6} y + x^{6} + x^{4} y + x^{3} y\right) X^{4} + \left(x^{9} + x^{6} + x^{5}\right) X^{3} + \left(x^{3} + x^{2}\right) X^{2} Y  \\& + \left(x^{11} + x^{10} + x^{8} + x^{5} + x^{3} y + x^{2} y\right) X^{2} + \left(x^{9} + x^{6} + x^{5}\right) Y + x^{12} + x^{9} y + x^{8} + x^{7} + x^{6} y + x^{5} y + x^{5}.
\end{align*}

Now, we calculate $F_{\le 12}$ such that $F_{\le 12}(d) = \ell_d^3 S_{\le d}(1,2)$ by using the identity recursion $F_{\le 12}^{(1)} - (g^{(1)})^3 F_{\le 12} = F_{12}^{(1)}$. Let $Z = (X^8+x)^3 (X^{16}+x+1)^3 F_{\le 12}$. The equation to be solved is
\begin{align*}
 (X^8+x)^3 (X^{16}+x+1)^3 (Z^{(1)} - N_{12}^{(1)}) = (N_g^{(1)})^3 Z.
\end{align*}
Put $Z = \sum _{i=0}^{48} a_iX^i + Y \sum_{j=0}^{40} b_jX^{j}$. This gives a system of $329$ equations in $90$ unknowns. The unique solution implies that $F_{\le 12} = N_{\le 12}/D_{\le 12}$, where $D _{\le 12} =  (x^2 + x) (X^8+x)^3 (X^{16}+x+1)^3)$, and $N_{\le 12}$ is 
\begin{align*}
&\left(x^{6} + x^{5} + x^{3} + x + 1\right) X^{48} + \left(x^{6} + x^{5} + x^{4} + x^{3}\right) X^{46} + \left(x^{8} + x^{7} + x^{4} + x^{3}\right) X^{44}  \\& + \left(x^{7} + x^{6} + x^{5} + x^{4} + x^{2} + x\right) X^{42}
+ \left(x^{6} + x^{5} + x^{4} + x^{3}\right) X^{41} + \left(x^{9} + x^{7} + x^{6} + x^{4} + x^{3} + x + 1\right) X^{40} \\&  + \left(x^{6} + x^{5} + x^{4} + x^{3}\right) X^{39} + \left(x^{7} + x^{6} + x^{5} + x\right) X^{38} 
+ \left(x^{7} + x^{3} + x^{2} + x\right) X^{37} + \left(x^{6} + x^{5} + x^{4} + x^{3}\right) X^{36} Y    \\&   + \left(x^{8} + x^{7} + x^{6} y + x^{5} y + x^{5} + x^{4} y + x^{3} y + x\right) X^{36}
+ \left(x^{7} + x^{3} + x^{2} + x\right) X^{35} \\&+ \left(x^{7} + x^{6} + x^{4} + x^{3} + x^{2} + x\right) X^{34}   + \left(x^{6} + x^{5} + x^{3} + x^{2}\right) X^{33} + \left(x^{7} + x^{3} + x^{2} + x\right) X^{32} Y\\& 
+ \left(x^{9} + x^{7} y + x^{7} + x^{6} + x^{5} + x^{4} + x^{3} y + x^{2} y + x y + x + 1\right) X^{32} + \left(x^{6} + x^{5} + x^{3} + x^{2}\right) X^{31} + \left(x^{8} + x\right) X^{30}\\& 
+ \left(x^{7} + x^{5} + x^{4} + x\right) X^{29} + \left(x^{6} + x^{5} + x^{3} + x^{2}\right) X^{28} Y  \\& + \left(x^{10} + x^{9} + x^{8} + x^{6} y + x^{5} y + x^{3} y + x^{3} + x^{2} y + x^{2} + x + 1\right) X^{28} 
+ \left(x^{7} + x^{5} + x^{4} + x\right) X^{27}  \\& + \left(x^{9} + x^{8} + x^{6} + x^{5} + x^{4} + x^{3}\right) X^{26} + \left(x^{8} + x^{7} + x^{5} + x^{4}\right) X^{25} + \left(x^{7} + x^{5} + x^{4} + x\right) X^{24} Y\\& 
+ \left(x^{11} + x^{10} + x^{9} + x^{7} y + x^{5} y + x^{4} y + x^{3} + x^{2} + x y\right) X^{24} + \left(x^{8} + x^{7} + x^{5} + x^{4}\right) X^{23}  \\& + \left(x^{9} + x^{6} + x^{5} + x^{4} + x^{3} + x\right) X^{22}
+ \left(x^{9} + x^{7} + x^{6} + x^{3}\right) X^{21} + \left(x^{8} + x^{7} + x^{5} + x^{4}\right) X^{20} Y \\&
+ \left(x^{9} + x^{8} y + x^{8} + x^{7} y + x^{7} + x^{6} + x^{5} y + x^{5} + x^{4} y + x^{3} + x^{2} + 1\right) X^{20} + \left(x^{9} + x^{7} + x^{6} + x^{3}\right) X^{19}\\&
+ \left(x^{8} + x^{6} + x^{5} + x^{3} + x^{2} + x + 1\right) X^{18} + \left(x^{7} + x^{5} + x^{4} + x\right) X^{17} + \left(x^{9} + x^{7} + x^{6} + x^{3}\right) X^{16} Y \\&
+ \left(x^{11} + x^{10} + x^{9} y + x^{8} + x^{7} y + x^{6} y + x^{4} + x^{3} y + x^{2} + x\right) X^{16} + \left(x^{7} + x^{5} + x^{4} + x\right) X^{15} \\&
+ \left(x^{9} + x^{7} + x^{3} + x + 1\right) X^{14} + \left(x^{8} + x^{7} + x^{6} + x^{4} + x^{3} + x^{2} + 1\right) X^{13} + \left(x^{7} + x^{5} + x^{4} + x\right) X^{12} Y \\&
+ \left(x^{11} + x^{10} + x^{9} + x^{7} y + x^{7} + x^{5} y + x^{4} y + x^{3} + x y\right) X^{12} + \left(x^{8} + x^{7} + x^{6} + x^{4} + x^{3} + x^{2} + 1\right) X^{11} \\&
+ \left(x^{10} + x^{9} + x^{7} + x^{4} + x^{3} + x^{2} + x\right) X^{10} + \left(x^{9} + x^{8} + x^{6} + x^{4} + x^{2} + x\right) X^{9}  \\& + \left(x^{8} + x^{7} + x^{6} + x^{4} + x^{3} + x^{2} + 1\right) X^{8} Y \\& 
+ \left(x^{12} + x^{11} + x^{9} + x^{8} y + x^{8} + x^{7} y + x^{6} y + x^{6} + x^{5} + x^{4} y + x^{4} + x^{3} y + x^{3} + x^{2} y + y\right) X^{8} \\&
+ \left(x^{9} + x^{8} + x^{6} + x^{4} + x^{2} + x\right) X^{7} + \left(x^{10} + x^{8} + x^{7} + x^{6} + x^{3}\right) X^{6} + \left(x^{10} + x^{8} + x^{7} + x^{6} + x^{3}\right) X^{5} \\&
+ \left(x^{9} + x^{8} + x^{6} + x^{4} + x^{2} + x\right) X^{4} Y + \left(x^{9} y + x^{9} + x^{8} y + x^{8} + x^{6} y + x^{5} + x^{4} y + x^{4} + x^{2} y + x y\right) X^{4} \\&
+ \left(x^{10} + x^{8} + x^{7} + x^{6} + x^{3}\right) X^{3} + \left(x^{10} + x^{8} + x^{7} + x^{6} + x^{3}\right) Y\\& + x^{12} + x^{11} + x^{10} y + x^{10} + x^{8} y + x^{7} y + x^{7} + x^{6} y + x^{3} y. 
\end{align*}

We first calculate  $F_3$, such that $F_3(d) = \ell_d^3 S_d(3)$. Since
\begin{align*}
 F_{<1}(d) = \ell_d S_{<d}(1) = \frac{ \ell_d^2 A_{d1} }{\ell_d A_{d0}},
\end{align*}
it follows that $\ell_d^2 A_{d1} = (\ell_d A_{d0}) ( \ell_d S_{<d}(1)) = F_1(d)F_{<1}(d)$. 
\begin{align*}
 \ell_d^3 S_d(3) &= \ell_d A_{d0} ( \ell_d^2 A_{d1} + (\ell_d A_{d0})^2)
  = F_1(d) ( F_1(d) F_{<1}(d) + F_1(d)^2 ).
\end{align*}
We then define $F_3 = F_1^2 F_{<1}+ F_1^3$.

Finally, $F_{<3}$ is calculated such that $F_{<3}(d) = \ell_d^3 S_{<d}(3)$. In a similar way as $F_{<1}$ was obtained, we obtain the equation 
\begin{align*}
F_{<3}^{(1)} = (g^{(1)} )^3 ( F_{<3} + F_3). 
\end{align*}
Making the change of variable $Z = (X^8+x)^3 ( X^{16}+x+1)^3 F_{<3}$ we obtain $(X^8+x)^3 ( X^{16}+x+1)^3 Z = (N_g^{(1)})^3 (Z+N_3)$. We put $Z = \sum _{i=0}^{48} a_i X^i + Y \sum _{j=0} ^{45}b_j X^j$ and obtaing a system of $334$ equations with $95$ unknowns. Then $F_{<3} = N_{<3}/D_{<3}$, where $D_{<3} = ( (x^2+x) (X^8 + x)^3  (X^{16} + x + 1)^3$ and $N_{<3}$ is 
\begin{align*}
&\left(x^{8} + x^{6} + x^{5} + x^{4} + x^{3} + x + 1\right) X^{48} + \left(x^{4} + x^{2}\right) X^{46} + \left(x^{8} + x^{6} + x^{4} + x\right) X^{44}\\& + \left(x^{5} + x^{3} + x^{2} + x\right) X^{42}
+ \left(x^{4} + x^{2}\right) X^{41} + \left(x^{10} + x^{7} + x^{3} + x^{2} + 1\right) X^{40} + \left(x^{4} + x^{2}\right) X^{39} \\&+ \left(x^{8} + x^{6} + x^{2} + x\right) X^{38} + \left(x^{5} + x^{4} + x^{3} + x\right) X^{37} 
+ \left(x^{4} + x^{2}\right) X^{36} Y\\& + \left(x^{10} + x^{9} + x^{8} + x^{7} + x^{5} + x^{4} y + x^{4} + x^{2} y\right) X^{36} + \left(x^{5} + x^{4} + x^{3} + x\right) X^{35} \\&
+ \left(x^{9} + x^{8} + x^{7} + x^{6} + x^{5} + x^{3} + x^{2} + x\right) X^{34} + \left(x^{8} + x^{6} + x^{5} + x^{4} + x^{3} + x^{2}\right) X^{33} \\&+ \left(x^{5} + x^{4} + x^{3} + x\right) X^{32} Y
+ \left(x^{8} + x^{7} + x^{6} + x^{5} y + x^{4} y + x^{4} + x^{3} y + x^{3} + x^{2} + x y + 1\right) X^{32} \\&+ \left(x^{8} + x^{6} + x^{5} + x^{4} + x^{3} + x^{2}\right) X^{31} 
+ \left(x^{4} + x\right) X^{30} + \left(x^{9} + x^{7} + x^{4} + x\right) X^{29}\\& + \left(x^{8} + x^{6} + x^{5} + x^{4} + x^{3} + x^{2}\right) X^{28} Y \\&
+ \left(x^{11} + x^{8} y + x^{7} + x^{6} y + x^{6} + x^{5} y + x^{5} + x^{4} y + x^{4} + x^{3} y + x^{3} + x^{2} y + 1\right) X^{28} \\& + \left(x^{9} + x^{7} + x^{4} + x\right) X^{27} 
+ \left(x^{10} + x^{9} + x^{8} + x^{5} + x^{3} + x^{2}\right) X^{26} + \left(x^{9} + x^{7}\right) X^{25} \\& + \left(x^{9} + x^{7} + x^{4} + x\right) X^{24} Y\\&
+ \left(x^{12} + x^{9} y + x^{9} + x^{8} + x^{7} y + x^{7} + x^{6} + x^{5} + x^{4} y + x^{4} + x^{3} + x^{2} + x y\right) X^{24} + \left(x^{9} + x^{7}\right) X^{23} \\&
+ \left(x^{10} + x^{9} + x^{5} + x^{4} + x^{3} + x\right) X^{22} + \left(x^{10} + x^{8} + x^{7} + x^{5} + x^{3} + x^{2}\right) X^{21} + \left(x^{9} + x^{7}\right) X^{20} Y \\&
+ \left(x^{11} + x^{9} y + x^{9} + x^{7} y + x^{5} + x^{4} + x + 1\right) X^{20} + \left(x^{10} + x^{8} + x^{7} + x^{5} + x^{3} + x^{2}\right) X^{19} \\&+ \left(x^{10} + x + 1\right) X^{18} 
+ \left(x^{9} + x^{8} + x^{7} + x^{4} + x^{2} + x\right) X^{17} + \left(x^{10} + x^{8} + x^{7} + x^{5} + x^{3} + x^{2}\right) X^{16} Y \\&
+ \left(x^{10} y + x^{8} y + x^{7} y + x^{7} + x^{5} y + x^{5} + x^{4} + x^{3} y + x^{3} + x^{2} y\right) X^{16} + \left(x^{9} + x^{8} + x^{7} + x^{4} + x^{2} + x\right) X^{15} \\&
+ \left(x^{6} + x^{5} + x^{4} + x^{2} + 1\right) X^{14} + \left(x^{10} + x^{9} + x^{8} + x^{7} + x^{4} + x^{2} + 1\right) X^{13} \\&+ \left(x^{9} + x^{8} + x^{7} + x^{4} + x^{2} + x\right) X^{12} Y \\&
+ \left(x^{12} + x^{11} + x^{10} + x^{9} y + x^{8} y + x^{8} + x^{7} y + x^{7} + x^{5} + x^{4} y + x^{4} + x^{2} y + x^{2} + x y + x\right) X^{12}\\&
+ \left(x^{10} + x^{9} + x^{8} + x^{7} + x^{4} + x^{2} + 1\right) X^{11} + \left(x^{11} + x^{8} + x^{7} + x^{2} + x\right) X^{10} \\&+ \left(x^{10} + x^{9} + x^{8} + x^{7} + x^{6} + x^{5}\right) X^{9} 
+ \left(x^{10} + x^{9} + x^{8} + x^{7} + x^{4} + x^{2} + 1\right) X^{8} Y \\&
+ \left(x^{13} + x^{10} y + x^{9} y + x^{9} + x^{8} y + x^{8} + x^{7} y + x^{7} + x^{6} + x^{4} y + x^{3} + x^{2} y + y\right) X^{8} \\&
+ \left(x^{10} + x^{9} + x^{8} + x^{7} + x^{6} + x^{5}\right) X^{7} + \left(x^{11} + x^{10} + x^{9} + x^{6} + x^{5} + x^{2} + x\right) X^{6} \\&
+ \left(x^{11} + x^{10} + x^{9} + x^{6} + x^{5} + x^{2} + x\right) X^{5} + \left(x^{10} + x^{9} + x^{8} + x^{7} + x^{6} + x^{5}\right) X^{4} Y\\&
+ \left(x^{10} y + x^{10} + x^{9} y + x^{9} + x^{8} y + x^{8} + x^{7} y + x^{7} + x^{6} y + x^{5} y\right) X^{4} \\&+ \left(x^{11} + x^{10} + x^{9} + x^{6} + x^{5} + x^{2} + x\right) X^{3} + \left(x^{11} + x^{10} + x^{9} + x^{6} + x^{5} + x^{2} + x\right) Y\\& + x^{11} y + x^{11} + x^{10} y + x^{9} y + x^{9} + x^{7} + x^{6} y + x^{6} + x^{5} y + x^{5} + x^{4} + x^{2} y + x y. 
\end{align*}

Notice that denominators of $F_{\le 12}$ and $F_{ <3}$ match. 
The leading term of $(x^8 + x^6 + x^5 + x^4 + x^3 + x + 1) N_{\le 12} - (x^6 + x^5 + x^3 + x + 1) N_{<3}$ is $(x^{14} + x^{13} + x^{11} + x^{9} + x^7 + x^6 + x^4 + x^2)X^{46} $, so that the degree of $E(d) = (x^8 + x^6 + x^5 + x^4 + x^3 + x + 1)F_{\le 12}(d)- (x^6 + x^5 + x^3 + x + 1) F_{<3}(d)$ for $d\ge 3$ is $- (-24+52 \times 2^{d-2})$;  since the degree of $\ell_d$ is  $-(2^{d+2} -2)$ the degree of $E(d)/\ell_d^3$  is $-(-12+2^d)$ ($d \ge 2$);  therefore, $(x^8 + x^6 + x^5 + x^4 + x^3 + x + 1)S_{\le d}(1,2)- (x^6 + x^5 + x^3 + x + 1) S_{<3}(d)$  tends to zero as $d$ tends to infinity. 
\end{proof}

\end{document}